\newcommand{\nega}{\hspace{-1em}}
\newcommand{\sucenn}[2]{{#1_{1},\hdots,#1_{#2}}}
\newcommand{\sucen}[2]{{#1_{1},#1_{2},\hdots,#1_{#2}}}
\newcommand{\suce}[1]{{#1_{1},#1_{2},#1_{3},#1_{4}}}
\newcommand{\sucet}[1]{{#1_{1},#1_{2},#1_{3}}}
\newcommand{\AAA}{\mathcal{A}}
\newcommand{\colu}[4]{\left[\begin{array}{c}#1\\#2\\#3\\#4
\end{array}\right]}
\newcommand{\colur}[4]{\left[\begin{array}{r}#1\\#2\\#3\\#4
\end{array}\right]}
\newcommand{\colurr}[4]{\left(\begin{array}{r}#1\\#2\\#3\\#4
\end{array}\right)}
\newcommand{\coll}[4]{\left[\begin{array}{r}#1\\#2\\#3\\\vdots\\#4
\end{array}\right]}
\newcommand{\colt}[3]{\left[\begin{array}{r}#1\\#2\\#3
\end{array}\right]}
\newcommand{\colc}[5]{\left[\begin{array}{r}#1\\#2\\#3\\#4\\#5
\end{array}\right]}
\newcommand{\cols}[6]{\left[\begin{array}{r}#1\\#2\\#3\\#4\\#5\\#6
\end{array}\right]}
\newcommand{\cole}[7]{\left[\begin{array}{r}#1\\#2\\#3\\#4\\#5\\#6\\#7
\end{array}\right]}
\newcommand{\m}{\medskip}
\newcommand{\N}{\mathbb{N}}
\newcommand{\Z}{\mathbb{Z}}
\newcommand{\R}{\mathbb{R}}
\newcommand{\TT}{\mathcal{T}}
\newcommand{\tconv}{\operatorname{tconv}}
\newcommand{\dd}{\operatorname{d}}
\newcommand{\card}{\operatorname{card}}
\newcommand{\trop}{\operatorname{trop}}
\newcommand{\rk}{\operatorname{rk}}
\newtheorem{thm}{Theorem}
\newtheorem{lem}[thm]{Lemma}
\newtheorem{dfn}[thm]{Definition}
\newtheorem{cor}[thm]{Corollary}
\newtheorem{ex}[thm]{Example}
\title{Distances on the tropical line determined by two points}
\author{
M.J. de la Puente  
}
\begin{document}
\maketitle

AMS class.: 15A80;  14T05.

Keywords and phrases:  tropical distance, integer length, tropical line,  normal matrix, idempotent matrix,  caterpillar tree, metric graph.

\begin{abstract}
Let $p',q'\in\R^n$. Write $p'\sim q'$ if $p'-q'$ is a multiple of $(1,\ldots,1)$. Two different points $p$ and $q$ in $\R^n/\sim$ uniquely determine a tropical line $L(p,q)$, passing through them, and stable under small perturbations. This line is a balanced unrooted semi--labeled tree on $n$ leaves. It is also a metric graph.
 
If some representatives $p'$ and $q'$ of $p$ and $q$ are the first and second columns of some real normal  idempotent order $n$ matrix $A$, we prove that the tree $L(p,q)$ is described by a matrix $F$, easily obtained from $A$.
We also prove that $L(p,q)$  is caterpillar. We prove that every vertex in $L(p,q)$ belongs to the tropical linear segment joining $p$ and $q$. A vertex, denoted $pq$, closest (w.r.t tropical distance) to $p$  exists in $L(p,q)$.  Same for $q$. The distances between pairs of adjacent vertices in $L(p,q)$ and the distances $\dd(p,pq)$, $\dd(qp,q)$  and $\dd(p,q)$ are certain entries of the matrix $|F|$. In addition,  if $p$ and $q$ are generic, then the tree $L(p,q)$ is trivalent.
The entries of $F$ are differences (i.e., sum of principal diagonal minus sum of secondary diagonal) of order 2 minors of the first two columns of $A$.
 \end{abstract}

\section{Introduction}\label{sec:intro}

Tropical algebra, geometry and analysis are novelties in mathematics.
 As for algebra (also called  extremal algebra, max--algebra, etc.) it is just algebra performed with unusual operations:  $\max$ (for addition) and $+$ (for multiplication). As for geometry, it can be understood as a degeneration (or shadow) of classical algebraic geometry.

\m
Tropical mathematics is an exciting fast growing field of research; see the collective works \cite{Gunawardena,Litvinov_ed,Litvinov_ed_2}, some general references for tropical algebra  \cite{Akian_HB,Baccelli,Butkovic_Libro,Cuninghame_New,Gaubert}, some general references for tropical geometry \cite{Brugalle_fran,Brugalle_engl,Einsiedler,Gathmann,Franceses,Rusos,Mikhalkin_T,Mikhalkin_W,Richter,Speyer_Sturm_2,Viro_log_paper,Viro_basic} and some pioneer works \cite{Cuninghame,Gondran,Wagneur_F,Yoeli,Zimmermann_K} among others.   In \cite{Baker_Faber,Chan} tropical curves are presented as metric graphs.

\m
In classical projective geometry, it is easy to determine the line passing through two different given points $p$ and $q$. If $[\sucen pn]$ and $[\sucen qn]$ are projective coordinates over a field, then the points $x=[\sucen xn]$ in such a line are described by the rank condition
\begin{equation*}
\rk\left[\begin{array}{ccc}
p_1&q_1&x_1\\p_2&q_2&x_2\\\vdots&\vdots&\vdots\\p_n&q_n&x_n\\
\end{array}\right]
=2.
\end{equation*}

\m
A basic question in tropical mathematics is to establish the properties of the unique tropical line $L(p,q)$, stable under small perturbations, determined by two given points $p$ and $q$ (to be precise, $L(p,q)$ is the limit, as $\epsilon$ tends to zero, of the tropical lines going through perturbed points $p^{v_\epsilon}, q^{v_\epsilon}$. Here, $p^{v_\epsilon}$ denotes a translation of $p$ by a
vector  $v_\epsilon$ whose length is $\epsilon$).
The aim of this paper is to answer this question  in a particular instance, namely,  when coordinates of $p$ and $q$ are columns of some normal idempotent square real matrix $A$.

\m
Tropical algebraic varieties can be defined algebraically (by means of ideals) or geometrically (by means of amoebas). Tropical curves can also be defined combinatorially (by means of balanced weighted graphs). For tropical lines,  weights can be disregarded, since they all are equal to one. This paper is about the \emph{combinatorial description} of the line $L(p,q)$. Moreover, we  obtain $L(p,q)$ as a metric graph, with additional information. Indeed, in  metric graphs, leaves have infinite length, while edges have finite length. The point $p$ (which, in general, is not a vertex of $L(p,q)$) sits on a certain leaf of  $L(p,q)$, and we determine the length  from $p$ to the closest inner vertex of $L(p,q)$ (same for $q$). These two lengths are extra information for the metric graph $L(p,q)$. 

\m
In this  paper we never use $-\infty$.
Write $\oplus=\max$ and $\odot=+$. These are the tropical operations addition and multiplication in $\R^n$. Let $(\sucen en)$ denote the canonical  basis in $\R^n$. \label{dfn:canonical} We work in the quotient space $Q^{n-1}:=\R^n/\sim$; see (\ref{eqn:Q}). There is a bijection between $Q^{n-1}$ and $\R^{n-1}$.

\m
Given different $p,q\in Q^{n-1}$, there may exist many tropical lines passing through $p$ and $q$, but there is only one such line which is stable under small perturbations; see \cite{Rusos, Gathmann,  Richter, Tabera_Pap}.
It is denoted $L(p,q)$.

\m
What do we know about tropical lines in $Q^{n-1}$?
The cases  $n=2$ or $3$ are easy.
Set $n=4$.  
In the generic case,
 a \emph{tropical line}  in $Q^3$ is  a \emph{balanced polyhedral complex} consisting of four rays $\suce r$  and an edge $r$, so that
$$L(p,q)=r\cup \bigcup_{j=1}^{4}r_j.$$
The  ray $r_4$ extends infinitely  in the direction of $e_1+e_2+e_3$ and positive sense, and the rays $r_j$ do so in the negative $e_j$ direction, for $j=1,2,3$.

\m
For arbitrary $n$, a \emph{generic} line  $L$ in $Q^{n-1}$ 
is a \emph{balanced unrooted trivalent semi--labeled tree $T$ on leaves marked $1,2,\ldots,n$}. Leaf marked $j$ in $T$  corresponds to ray  $r_j$ in  $L$.  This tree is   \emph{semi--labeled} because  its inner vertices are left unlabeled.
This is all well--known;   see \cite{Gathmann,Franceses,Rusos,Mikhalkin_W,Richter,Sturmfels}.

\m
What do we prove about $L(p,q)$? Let $\tconv(p,q)$ denote the tropical segment joining $p$ and $q$ in $Q^{n-1}$. We have $\tconv (p,q)\subset L(p,q)$, following \cite{Develin_Sturm}.  \emph{Suppose that $p,q$ have representatives in  $\R^n$  whose coordinates are the first and second columns of some normal idempotent square real matrix $A$ of order $n$.}
in this paper we prove that every vertex  of $L(p,q)$ belongs to  $\tconv(p,q)$; see theorem \ref{thm:tree}. This is not true in less restrictive conditions.  Since $\tconv(p,q)$ is compact, then   there is a vertex in $L(p,q)$ closest  to $p$ (same for $q$), with respect to tropical distance (see (\ref{eqn:dist}) for the definition and properties of tropical distance). Moreover, the tree $L(p,q)$ is caterpillar. If $p$ and $q$ are generic, then $L(p,q)$ is trivalent; see also theorem \ref{thm:tree}.

\m
The paper goes as follows. First, we define the \emph{difference} of an order 2 matrix; see definition \ref{dfn:differ}. We define the \emph{matrix of differences} $F=(f_{kl})$ relative to two columns of $A$.  Then,   for $n=4$   we prove that the combinatorics of the tree $L(p,q)$ are determined by the sign of $f_{34}$; see remark in p. \pageref{remark_after_proof}. Moreover,
the tropical distances  $\dd(p,pq)$, $\dd(pq,qp)$, $\dd(q,qp)$ and $\dd(p,q)$ are certain entries of the matrix of absolute values $|F|$. Here $pq$ (resp. $qp$) denotes the vertex of $L(p,q)$ closest to $p$ (resp. to $q$), with respect to tropical distance. Notice that $pq$ and $qp$ are the only vertices of the line $L(p,q)$, for $n=4$. This is theorem \ref{thm:dist4}.
Then, theorem \ref{thm:tree}   is an extension of theorem \ref{thm:dist4} to  arbitrary $n$.

\m
The key to theorem \ref{thm:tree} is additivity  of matrix $F$, as stated in (\ref{eqn:addit}).
To prove that $\dd(p,q)=|f_{12}|$ is straightforward; see lemma \ref{lem:dist_given}.
The proof of theorem \ref{thm:tree} is recursive. It goes as follows. The combinatorics of the tree $L(p,q)$ and the distances between consecutive vertices in it are determined in $n-3$ steps. For each step, we deal with an old tree $T'$ and a new tree $T$. The tree $T$ has one more leaf that $T'$. More precisely, $T$ is a \emph{tropical modification} of $T'$  (see \cite{Brugalle_fran,Brugalle_engl,Mikhalkin_T} for the meaning of modification in tropical geometry). All the distances in $T$ are  kept the same as in $T'$ with one exception: a distance in $T'$ breaks up into two, due to the tropical modification  that has happened. We make this breaking precise by defining \emph{fractures}; see definition \ref{dfn:fracture}. For the understanding of the whole process,  example \ref{ex:2} is provided in full detail, step by step, with accompanying figures \ref{fig:05} to \ref{fig:09}.

\m
We work with only two columns of a normal idempotent matrix (NI, for short). These matrices $A=(a_{ij})$ are  defined by extremely simple linear equalities and inequalities; see (\ref{eqn:NI}).
These inequalities are crucial for us to carry computations through! Normal matrices were first studied by Yoeli (under another name) in \cite{Yoeli}.  Normal idempotent matrices are related to   metrics in \cite{Johns_Kambi_Idempot,Sturm_Yu}. See \cite{Puente_kleene} for   applications of NI matrices to alcoved polytopes, and \cite{Linde_Puente} for applications of normal and NI matrices to tropical commutativity.

\m
Our results and definitions are gathered in  sections \ref{sec:F}, \ref{sec:n=4} and \ref{sec:general}. Lemma \ref{lem:types} and theorem \ref{thm:dist4} were obtained with A. Jim\'{e}nez  and appeared before in \cite{Jimenez_P}. Strictly speaking, the contents of section \ref{sec:n=4}  are included in section \ref{sec:general}. However, we prefer to keep section \ref{sec:n=4} as it stands, because it is helpful for the grasping of the rest of the paper.

\section{Background}\label{sec:back}
For $n\in\N$, set $[n]:=\{1,2,\ldots,n\}$.
Let  $\R^{n\times m}$ denote the set of  real matrices having $n$ rows and $m$ columns. Define tropical sum and product of matrices following the same rules of classical linear algebra, but replacing addition (multiplication) by tropical addition (multiplication).
We will never use classical  multiplication of matrices, in this note.

\m
We will always write the coordinates of points  in columns.



\m
By definition, a square real matrix $A=(a_{ij})$ is \emph{normal} if  $a_{ii}=0$ and $a_{ij}\le 0$, all $i,j\in [n]$. Any real matrix can be normalized, not uniquely;  see \cite{Butkovic_Simple,Butkovic_Libro} for details.
A matrix 
is \emph{idempotent} if  $A=A\odot A$. If each diagonal entry of $A=(a_{ij})$ vanishes, then  $A\le A\odot A$, because   for each $i,j\in[n]$, we have
$$a_{ij}\le \max_{k\in[n]} a_{ik}+a_{kj}=(A\odot A)_{ij}.$$
We will work with  
\emph{normal idempotent matrices} (NI, for short).
Being NI is characterized by the following  linear equalities and
inequalities:
\begin{equation}\label{eqn:NI}
a_{ii}=0,\quad a_{ij}\le0,\quad a_{ik}+a_{kj}\le a_{ij}, \quad i,j,k\in[n], \quad \card\{i,j,k\}\ge2.
\end{equation}
In particular, $a_{ik}+a_{ki}\le0$, for $i,k\in[n]$.


\m
The \emph{tropical determinant} (also called \emph{tropical permanent, max--algebraic permanent}, etc.; see \cite{Butkovic_Libro, Richter}) of   $A=(a_{ij})\in\R^{n\times n}$
 is defined as $$|{A}|_{\trop}
 =\max_{\sigma\in S_n}a_{1\sigma(1)}+a_{2\sigma(2)}+\cdots+a_{n\sigma(n)},$$
where $S_n$ denotes the permutation group in $n$ symbols. The
matrix $A $ is  \emph{tropically singular} if this maximum
 is attained twice, at least. Otherwise, $A$ is \emph{tropically regular}. 
 We will never use 
 classical determinants in this note. See \cite{Develin_Santos_Sturm} for tropical rank issues.

 \m
Two different points $p',q'$  in $\R^n$ determine  the following set of tropical linear combinations:
\begin{equation}\label{eqn:comb_pq}
\{\lambda \odot p'\oplus \mu\odot q'\in\R^n:\lambda,\mu\in\R\}.
\end{equation}
This set is closed under tropical multiplication by any real number $\nu$ i.e., it is closed under classical addition of
vectors $\nu u$, for $u=(1,\ldots,1)$. Therefore, it is useful to   work in the quotient space
\begin{equation}\label{eqn:Q}
Q^{n-1}:=\R^n/\sim
\end{equation}
where $(\sucen an)\sim(\sucen bn)$ if
\begin{equation*}\label{eqn:equiv_rel}
(\sucen an)=\lambda \odot(\sucen bn)=(\sucen{\lambda+b}n),
\end{equation*}
for some $\lambda\in\R$.
The  class of $a=(\sucenn an)\in\R^n$ will be denoted $[\sucenn an]$ or $\overline a$.
The operations $\oplus$ and $\odot$ carry over to  $Q^{n-1}$.
Each element in $Q^{n-1}$ has a \emph{unique representative whose last coordinate is null}; in particular, $Q^{n-1}$ can be identified with the classical hyperplane
\begin{equation}\label{eqn:Hn}
H_n:=\{x\in \R^n: x_n=0\}
\end{equation}
inside $\R^n$. As vector spaces, $H_n$ is isomorphic to $\R^{n-1}$. We will often identify $Q^{n-1}$  with $H_n$ in the sequel. \label{dfn:identify}  By this identification, the topology induced by the tropical distance corresponds to the usual topology.

\m
Given  different points $p,q\in Q^{n-1}$, consider  representatives $p',q'$ in $\R^n$. The image of (\ref{eqn:comb_pq}) in $Q^{n-1}$ is denoted $\tconv(p,q)$ and called the \emph{tropical line segment} determined by $p$ and $q$. \label{dfn:tconv}
By  \cite{Develin_Sturm}, the set $\tconv(p,q)$, viewed in $H_n$, is the concatenation of, at most, $n-1$ ordinary line segments, and the slope of each such line segment is a zero--one vector. \label{dfn:segment} For negative $\lambda$, very large in absolute value, we get
$\lambda \odot p'\oplus \mu \odot q'=\mu\odot q'$, whence  $\lambda \odot p\oplus \mu \odot q=q$ is an endpoint of $\tconv(p,q)$.
(Here we have a difference between classical and tropical mathematics. In classical mathematics, expression (\ref{eqn:comb_pq}) corresponds to a line, not a segment!) The tropical segment $\tconv(p,q)$ is compact and connected, classically.


\m
For $p\in\R^{n}$,  set
$$||p||:=\max_{i,j\in[n]} \{|p_i|, |p_i-p_j|\}.$$
For $p,q\in Q^{n-1}$, choose (unique) representatives $p',q'\in \R^n$ with null last coordinate and set
\begin{equation}\label{eqn:dist}
\dd (p,q):= ||p'-q'||=\max_{i,j\in[n]}\{|p_i-q_i|, |p_i-q_i-p_j+q_j|\}.
\end{equation}
This defines a distance (or a metric) in $(Q^{n-1},\oplus,\odot)$, called \emph{tropical distance}; see \cite{Cohen,Cuninghame_But,Develin_Sturm,Puente_kleene}. We will not use any other distance in this paper.


\m
Recall that the \emph{integer length} (also called lattice length) of a classical segment $ab$ in $\R^n$ joining points $a$ and $b$  is the ratio between the Euclidean length  of $ab$ and the minimal Euclidean length  of integer vectors parallel to $ab$.  If  $a,b\in \Z^2$, then the integer length of $ab$ is one less the number on integer points on the segment $ab$.

\m Recall that the tropical segment $\tconv(p,q)$ is a concatenation of classical bounded segments.
Thus, the integer length  of $\tconv(p,q)$ is the sum of the integer lengths of those segments (see p. \pageref{dfn:tconv}).
It turns out that $\dd(p,q)$ equals the integer length of  $\tconv(p,q)$.

\m
Notice that $\dd$ is additive for tropically collinear points.
For example,  given $p,q,r$ and $s\in Q^2$ (represented in figure \ref{fig:01} by points in $H_3\simeq\R^2$), with $p'=(-2,-2,0)^t$, $q'=(0,0,0)^t$, $r'=(-5,-2,0)^t$ and $s'=(-2,-5,0)^t$,       we have $\dd (p,q)=2$ (not $2\sqrt{2}$!), $\dd (r,s)=\max\{3,6\}=6=3+3$  and
$\dd (r,q)=\max\{5,2,3\}=5=3+2=\dd (s,q)$.

\begin{figure}[h]
 \centering
  \includegraphics[width=8cm,keepaspectratio]{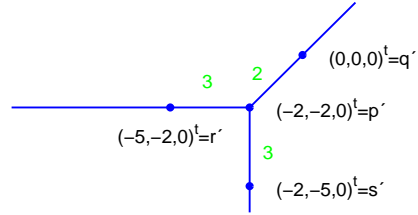}\\
  \caption{Tropical line in $Q^2$ with vertex at the point $p=[-2,-2,0]^t$. It looks like a tripod. Distances are indicated in green.}
  \label{fig:01}
  \end{figure}

\m
For any $S\subseteq [n]$, write $e_S:=\sum_{j\in S}e_j$ and notice that
 \begin{equation}
 \overline{e_S}=-\overline{e_{S^c}}\quad \text{in}\  Q^{n-1},
 \end{equation} where $S^c$ is the complementary to $S$ in $[n]$. In particular, $\overline{e_{12\ldots n}}=\overline{0}$.

\m
Any unbounded closed segment in $\R^{n-1}$ in the direction of some canonical basis vector and negative sense is called a \emph{ray}. Write  $r_j$ for a ray in the $e_j$ direction, for $j\in [n-1]$. Any unbounded closed segment in the direction of $e_{12\ldots n-1}$  and positive sense is also called a \emph{ray}. By abuse of notation,  we denote such a ray by $r_n$.   A ray  $r_j$ is \emph{maximal} inside a line $L$ if the endpoint of $r_j$ is a vertex of $L$. \label{dfn:maximal} An \emph{edge} is a bounded closed segment.


\m
We have $\oplus=\max$ and $\odot=+$. Then,
a \emph{tropical monomial} is a classical linear form $\sum_i a_ix_i$, and a \emph{tropical polynomial} is a maximum
\begin{equation*}
P(\sucen xn)=\max_{a\in \AAA} c_a+a_1x_1+a_2x_2+\cdots a_nx_n,\quad c_a\in\R,
\end{equation*}
and $\AAA\subset \N^n$ finite. The corresponding function $P:\R^n\to \R$  is piecewise linear and concave.   The \emph{tropical hypersurface} determined by $P$ in $\R^n$ is the set of points  where the \emph{maximum is attained twice, at least}. Equivalently, it is the set of points where $P$ is not differentiable; see \cite{Brugalle_fran,Brugalle_engl,Gathmann,Rusos,Mikhalkin_W,Richter,Speyer_Sturm}. In particular, we have tropical lines, planes and  hyperplanes in $\R^n$. Then we mod out by $\sim$, to get tropical lines, planes and  hyperplanes in $Q^{n-1}$.

\m
We work in $(Q^{n-1},\oplus,\odot)$. Algebraically, a tropical line in codimension one (i.e., in $Q^2$) is determined by one  tropical polynomial of degree one. A tropical line in higher codimension is determined by an ideal generated by degree--one tropical polynomials. Tropical lines have been thoroughly studied in \cite{Speyer_Sturm}. The paper \cite{Richter} contains a detailed description of  tropical lines in $Q^3$; see below p. \pageref{lines}.


\m
A generic line $L$ in $Q^2$ looks like a tripod in $H_3\simeq \R^2$; see figure \ref{fig:01}. It consists of three rays $\sucet r$ meeting at vertex. If $L=L(p,q)$, then the vertex  is computed by the \emph{tropical Cramer's rule}; see \cite{Richter,Sturmfels,Tabera_Pap}. It goes as follows: given coordinates $[\sucet p]^t, [\sucet q]^t$ for $p$  and $q$, consider
the   $2\times 2$ tropical minors:
\begin{equation}\label{not:mij}
m_{ij}:=\left|\begin{array}{cc} p_i&q_i\\p_j&q_j\\
\end{array}\right|_{\trop}\nega
=\max\{p_i+q_j,p_j+q_i\}.
\end{equation}
Then the vertex  of $L(p,q)$ is
\begin{equation}\label{eqn:Cramer}
[-m_{23},-m_{13},-m_{12}]^t.
\end{equation}

\m
Fix $n=4$. Let us identify $Q^3$   with $H_4\simeq \R^3$. Set theoretically, a \emph{tropical line} $L$ in $\R^3$ consists of four rays $\suce r$  and, in the generic case, an edge $r$:
\begin{equation*}
L=r\cup \bigcup_{j=1}^{4}r_j.
\end{equation*}
We have $r_j\cap r\neq\emptyset$, for all $j\in[4]$. If $r$ collapses to a point (in the non--generic case), then $r_j\cap r_k\neq\emptyset$, for all $j,k\in[4]$.
 A line $L$  in $Q^3$ belongs to one of the following \emph{combinatorial types}:
\begin{equation*}
\{12,34\},\qquad \{13,24\},\qquad \{14,23\},\qquad \{1234\}.
\end{equation*}
Indeed, the line $L$ is of type $\{ij,kl\}$ if and only if  $L$  has two vertices, denoted $v^{ij}$ and $v^{kl}$, and the segments $r,r_i,r_j$ meet at $v^{ij}$ and  $r,r_k,r_l$ meet at $v^{kl}$, where $\{i,j,k,l\}=[4]$. In particular, types  can be written in various ways: for example,  $\{12,34\}=\{21,34\}=\{21,43\}=\{34,12\}=\{43,12\}$, etc.
 Moreover, the line $L$ is a  trivalent tree if its   type is $\{12,34\},\{13,24\}$ or $\{14,23\}$, and this is the generic case; see figure \ref{fig:02}.
Let $\{i,j,k,l\}=[4]$. We can assume that $i\neq 4\neq j$, without loss of generality. Notice that  \emph{the direction of the segment $r$ of a line $L$ of type $\{ij,kl\}$ is $e_{ij}$}, by the balancing condition.\label{repeat}
On the other hand, if the type of $L$ is
 $\{1234\}$, then the edge $r$ has collapsed to a point,  and the four rays $\suce r$ meet at a point, called vertex of $L$ and denoted $v^{12 34}$. 

\begin{figure}[h]
 \centering
  \includegraphics[width=14cm]{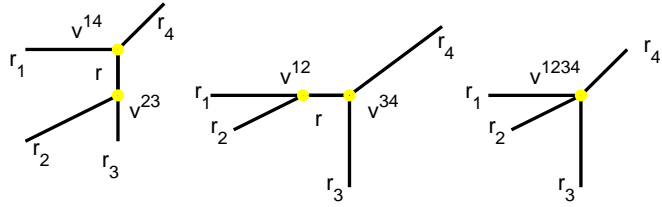}\\
  \caption{Some tropical lines in 3--space: type  $\{14,23\}$ on the left, type $\{12,34\}$ center and  type $\{1234\}$, on the right. These are non--planar balanced polyhedral complexes in $H_4\simeq\R^3$, where the ray $r_4$ points in the direction $e_{123}$,  positive sense. The segment $r$ separates rays $r_1,r_4$ from $r_2,r_3$ in the $\{14,23\}$ case.}
  \label{fig:02}
\end{figure}

\m
It is well--known that two different points $p,q\in Q^3$ determine a unique tropical line $L(p,q)$  passing through them and stable under small perturbations;  see \cite{Develin_Sturm,Richter,Speyer_Sturm}. If $L=L(p,q)$ and we want to compute the vertices of this line,\label{lines} first we must find out the combinatorial type of $L$. Here we follow \cite{Richter}.
For $1\le i<j\le4$, consider  the   $2\times 2$ tropical minors $m_{ij}$ defined in (\ref{not:mij}).
These minors can be arranged into an upper triangular matrix
\begin{equation}\label{eqn:M}
M=\left(\begin{array}{ccc} m_{12}&m_{13}&m_{14}\\&m_{23}&m_{24}\\&&m_{34}\\
\end{array}\right).
\end{equation}
The $m_{ij}$ are not independent: they satisfy the \emph{tropical Pl\"{u}cker relation}, i.e., the following maximum is attained twice, at least:
\begin{equation}\label{eqn:m}
m:=\max\{m_{12}+m_{34}, m_{13}+m_{24}, m_{14}+m_{23}\}.
\end{equation}
Then, by \cite{Richter},
\begin{itemize}
\item the type of $L(p,q)$ is $\{12,34\}$ when $m_{12}+m_{34}<m$,
\item the type of $L(p,q)$ is $\{13,24\}$  when $m_{13}+m_{24}<m$,
\item the type of $L(p,q)$ is $\{14,23\}$  when $m_{14}+m_{23}<m$,
\item the type of $L(p,q)$ is $\{1234\}$  when the maximum  $m$ is attained three times.
\end{itemize}

\m
A point $x$ belongs to $L(p,q)$ if and only if
\begin{equation*}
\rk\left[\begin{array}{ccc}
p_1&q_1&x_1\\p_2&q_2&x_2\\p_3&q_3&x_3\\p_4&q_4&x_4\\
\end{array}\right]_{\trop}\nega
=2;
\end{equation*}
This \emph{tropical rank} condition  means that the value of each of the following $3\times 3$ tropical minors  is attained twice, at least:
\begin{equation*}\label{eqn:omit1}
m_1(x):=\left|\begin{array}{ccc}
p_2&q_2&x_2\\p_3&q_3&x_3\\p_4&q_4&x_4\\
\end{array}\right|_{\trop}\nega
=\max\{x_2+m_{34},x_3+m_{24},x_4+m_{23}\}
\end{equation*}
\begin{equation*}\label{eqn:omit2}
m_2(x):=\left|\begin{array}{ccc}
p_1&q_1&x_1\\p_3&q_3&x_3\\p_4&q_4&x_4\\
\end{array}\right|_{\trop}\nega
=\max\{x_1+m_{34},x_3+m_{14},x_4+m_{13}\}
\end{equation*}
\begin{equation*}\label{eqn:omit3}
m_3(x):=\left|\begin{array}{ccc}
p_1&q_1&x_1\\p_2&q_2&x_2\\p_4&q_4&x_4\\
\end{array}\right|_{\trop}\nega
=\max\{x_1+m_{24},x_2+m_{14},x_4+m_{12}\}
\end{equation*}
\begin{equation*}\label{eqn:omit4}
m_4(x):=\left|\begin{array}{ccc}
p_1&q_1&x_1\\p_2&q_2&x_2\\p_3&q_3&x_3\\
\end{array}\right|_{\trop}\nega
=\max\{x_1+m_{23},x_2+m_{13},x_3+m_{12}\}.
\end{equation*}
Each tropical determinant above has been expanded by the last column, by the tropical Laplace's rule.
Now, for any positive, large enough $u\in\R$,  the points
  \begin{equation*}
y_1(u)=\colur{-u} {-m_{34}}{-m_{24}}{-m_{23}}, y_2(u)=\colur {-m_{34}}{-u} {-m_{14}}{-m_{13}},
y_3(u)=\colur{-m_{24}}{-m_{14}}{-u} {-m_{12}}, y_4(u)=\colur {-m_{23}}{-m_{13}}{-m_{12}} {-u}
\end{equation*}
satisfy that the maximum $m_j(y_j(u))$ is attained three times, for each $j\in[4]$.
Moreover, the point  $y_j(u)$  moves along   a ray $r_j$, as $u $ tends to $+\infty$.

\m
Say the type of $L(p,q)$ is $\{12,34\}$.
Then values $\suce u\in\R$ can be determined so that $y_1(u_1)=y_2(u_2):=v^{12}$ and $y_3(u_3)=y_4(u_4):=v^{34}$, obtaining the following vertices for $L(p,q)$ in $Q^3$:
\begin{equation*}\label{eqn:v^12}
v^{12}=\colur {m_{13}-m_{23}-m_{34}}{-m_{34}}{-m_{24}}{-m_{23}}, \quad
v^{34}=\colur {-m_{24}}{-m_{14}}{m_{13}-m_{12}-m_{14}}{-m_{12}}.
\end{equation*}

Say the type of $L(p,q)$ is $\{13,24\}$.
Similar calculations yield  the following vertices for the line $L(p,q)$, in this case:
\begin{equation}\label{eqn:v^13}
v^{13}=\colur {-m_{24}}{-m_{14}}{ -m_{24}-m_{14}+m_{34}}{-m_{12}},\quad
v^{24}=\colur {-m_{23}}{-m_{13}}{-m_{12}}{ -m_{13}-m_{12}+m_{14}}.
\end{equation}

Say the type of $L(p,q)$ is $\{1234\}$.
Then   we get
\begin{equation*}
v^{1234}=\colur {m_{13}+m_{14}-m_{34}}{m_{12}}{m_{13}}{m_{14}}.
\end{equation*}
 Computations are similar for type $\{14,23\}$. 

\m
Suppose now $n$ that is arbitrary.
A \emph{generic} line $L$ in $Q^{n-1}$ is (identified with)  a \emph{balanced unrooted trivalent semi--labeled tree $T$ on leaves marked $1,2,\ldots,n$} inside $H_n\simeq\R^{n-1}$. Leaf  $j$ of  $T$ corresponds to  ray $r_j$ of the line $L$, while the inner vertices of $T$ are left unlabeled. In particular, generic tropical lines sitting in $Q^{n-1}$ and $Q^{m-1}$ cannot be homeomorphic, if $n\neq m$.

\m
We consider the space  $\TT_n$ of phylogenetic trees, studied in detail in \cite{Billera,Speyer_Sturm} (although this space is denoted $\mathcal{G}'''_{2,n}$  in \cite{Speyer_Sturm}). Then   $\TT_n$ is a simplicial complex of pure dimension equal to $n-4$. The number of facets of $\TT_n$ is
$$(2n-5)!!$$
(i.e., the product of all odd numbers between $2n-5$ and 1, called \emph{Schr\"{o}der number}). \emph{Each facet of $\TT_n$ corresponds to a combinatorial type of unrooted trivalent semi--labeled trees on  $n$ leaves, i.e., to a combinatorial type of generic line in $Q^{n-1}$.} In particular, for $n=4$, there are 3 types  (we have seen these types  above; they were denoted $\{12,34\}$, $\{13,24\}$ and $\{14,23\}$); for $n=5$, there are 15 types; for $n=6$, there are 105 types, and so on.

\m
It is known (see \cite{Sturmfels}) that $\TT_5$ is the Petersen graph: it has 15 edges (these correspond to the 15 types of generic tropical lines in $Q^4$) and 10 vertices (these correspond to types of non--generic tropical lines, where the degree of some vertex of the line is 4). Every generic tropical line in $Q^4$ is a trivalent caterpillar tree on 5 leaves; see \cite{Speyer_Sturm,Sturmfels}.

\m
Recall that a tree is \emph{caterpillar} if it contains  a path passing through every vertex  of degree $\ge2$. For instance, every tree on four leaves is caterpillar.  See figure \ref{fig:03} for trivalent caterpillar and snowflake trees on six leaves.

\m
It is known that $\TT_6$ has 25 vertices, 105 edges and 105 triangles (i.e., there are 105 types of generic tropical lines in $Q^5$): 90 triangles correspond to trivalent caterpillar trees on 6 leaves, and 15 triangles to trivalent snowflake trees on 6 leaves; see
\cite{Richter,Speyer_Sturm}.

\m
Any trivalent semi--labeled tree  $T$ on $n$ leaves  can be described by a finite family of \emph{bipartitions} of $[n]$: a bipartition for each inner edge of $T$. 
\label{dfn:bipartitions}

\m
Given points $p,q\in Q^{n-1}$,  we will  have to describe $L(p,q)$ as a tree, combinatorially. If  $L(p,q)$ is trivalent, this will be achieved by giving a family of bipartitions of $[n]$:
\begin{equation*}
\{S_1,S_1^c\},\{S_2,S_2^c\},\ldots,\{S_t,S_t^c\},
\end{equation*}
for some $t\in\N$ and $S_j\subset[n]$, $j\in [t]$.


\begin{figure}[h]
 \centering
  \includegraphics[width=11cm,keepaspectratio]{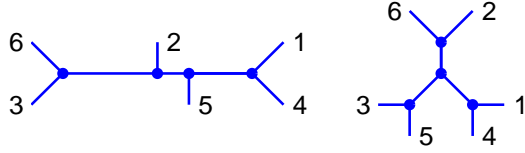}\\
  \caption{Two trivalent semi--labeled trees on six leaves. The inner vertices are not labeled. On the left, caterpillar having three inner edges. This tree is described  the bipartitions $\{36,1245\},\{236,145\},\{2356,14\}$. There is one inner edge separating  leaves marked 3 and 6, from leaves marked 1, 2, 4 and 5.
On the right, a snowflake tree having three inner edges. This tree is described by the bipartitions $\{26,1345\},\{14,2356\},\{35,1246\}$.}
  \label{fig:03}
  \end{figure}

\section{Differences and tropical distances}\label{sec:F}

\begin{dfn}\label{dfn:differ}
Given numbers $a,b,c,d\in\R$, the \emph{difference} of the matrix $\left[\begin{array}{cc}
a&b\\
c&d\\
\end{array}\right]$ is $a+d-b-c$ (principal  diagonal minus secondary diagonal).
\end{dfn}


Consider $A\in\R^{n\times n}$ and write $\underline{i}$ to denote the $i$--th column of $A$. Let $i,j,k,l\in[n]$ with $i<j$  and $k<l$. By $A(kl;ij)$ we denote the minor $\left[\begin{array}{cc}
a_{ki}&a_{kj}\\
a_{li}&a_{lj}\\
\end{array}\right]$.

\begin{dfn}\label{dfn:F}
Fix the $i$--th and  $j$--th   columns of a matrix  $A\in \R^{n\times n}$, with  $1\le i<j\le n$.
For $1\le k<l \le n$, set $F=(f_{kl})$ with
\begin{equation*}
f_{kl}=a_{ki}+a_{lj}-a_{kj}-a_{li}
\end{equation*}
i.e., $f_{kl}$ is the \emph{difference of the minor} $A(kl;ij)$. 
(Obviously, the matrix $F$ depends on $i$ and $j$).
\end{dfn}

\m
Clearly, \begin{equation}\label{eqn:addit}
f_{kl}+f_{lr}=f_{kr}
\end{equation} for $k<l<r$. This \emph{additivity}  (similar to that of Pascal triangle) tells us that $F$ can be recovered from  entries $f_{k-1,k}$. Compare with subadditivity of $A$ shown in (\ref{eqn:NI}).

\begin{lem}\label{lem:f}
If $A\in \R^{n\times n}$ is NI and $F$ is defined above, then
$f_{il}\ge0$, for $i<l$ and $f_{jl}\le0$, for $j<l$. 
\end{lem}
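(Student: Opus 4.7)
The plan is to unfold the definitions of $f_{il}$ and $f_{jl}$ and observe that each inequality reduces directly to one of the subadditivity inequalities $a_{xy}+a_{yz}\le a_{xz}$ packaged in (\ref{eqn:NI}), once the vanishing of the diagonal entries $a_{ii}=a_{jj}=0$ is used.

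First I would handle $f_{il}$ for $i<l$. By definition,
\begin{equation*}
f_{il}=a_{ii}+a_{lj}-a_{ij}-a_{li}=a_{lj}-a_{ij}-a_{li},
\end{equation*}
using $a_{ii}=0$. So $f_{il}\ge 0$ is equivalent to $a_{li}+a_{ij}\le a_{lj}$, which is exactly the idempotency inequality from (\ref{eqn:NI}) with the indices $(l,i,j)$, i.e.\ $a_{xy}+a_{yz}\le a_{xz}$ taking $x=l$, $y=i$, $z=j$. Note the three indices are distinct since $i<l$ and $i<j$, so $\card\{l,i,j\}\ge 2$ is certainly satisfied.

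Next I would treat $f_{jl}$ for $j<l$ by the mirror computation. Using $a_{jj}=0$,
\begin{equation*}
f_{jl}=a_{ji}+a_{lj}-a_{jj}-a_{li}=a_{lj}+a_{ji}-a_{li},
\end{equation*}
so $f_{jl}\le 0$ amounts to $a_{lj}+a_{ji}\le a_{li}$, which is (\ref{eqn:NI}) with $x=l$, $y=j$, $z=i$.

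There is no real obstacle here; the only subtlety is bookkeeping the normalization ($a_{ii}=0=a_{jj}$) together with the right instance of the subadditive inequality. In particular, the statement does \emph{not} use the sign condition $a_{kl}\le 0$ from (\ref{eqn:NI}), only idempotency together with the normal condition on the diagonal, so the same argument would work for any idempotent matrix whose diagonal vanishes.
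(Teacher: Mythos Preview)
Your proof is correct and is essentially the paper's own argument, just written out in more detail: the paper simply records $f_{il}=a_{lj}-a_{li}-a_{ij}\ge0$ and $f_{jl}=a_{ji}+a_{lj}-a_{li}\le0$, each by the subadditivity clause of (\ref{eqn:NI}), exactly as you do. One small quibble: from $i<l$ and $i<j$ you cannot conclude that $l,i,j$ are \emph{all} distinct (the case $l=j$ is allowed), but your stated conclusion $\card\{l,i,j\}\ge 2$ is nonetheless correct, so this does not affect the argument.
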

\begin{proof}
$f_{il}=a_{lj}-a_{li}-a_{ij}\ge0$ and $f_{jl}=a_{ji}+a_{lj}-a_{li}\le0$, by (\ref{eqn:NI}).
\end{proof}
Examples of $F$ can be found in p. \pageref{ex:1} and \pageref{eqn:F}.

\m

For $1\le i<j\le n$,
 let $L_{ij}^{A}$ denote the line determined by columns $i$--th and $j$--th of $A$. Write $L_{ij}$, if $A$ is understood.
We will see that \emph{some entries of the  absolute value matrix $|F|$ are equal to some tropical distances  between certain points of $L_{ij}$,  the distance being defined in  (\ref{eqn:dist}).}

\m
To begin with, we have an easy lemma.

\begin{lem}\label{lem:dist_given}
Assume $A\in \R^{n\times n}$ is NI and
fix     $1\le i<j\le n$ with $F$  as in definition \ref{dfn:F}. Then
$\dd(\underline{i},\underline{j})=|f_{ij}|$.
\end{lem}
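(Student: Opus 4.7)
The plan is to reduce the distance to a simple maximum of all the $f_{kl}$, and then use the subadditivity (\ref{eqn:NI}) of $A$ twice to bound every $|f_{kl}|$ by $|f_{ij}|$. The main obstacle is just bookkeeping with representatives and signs; once the distance is rewritten in terms of the $f_{kl}$, the inequality essentially falls out of NI.

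First I would observe that the formula for $\dd$ in (\ref{eqn:dist}) is applied to representatives with null last coordinate, but the quantities $|p'_k-q'_k-p'_l+q'_l|$ are invariant under $\sim$ (they do not see an additive constant), and when $p'_n-q'_n=0$ the terms $|p'_k-q'_k|$ coincide with $|p'_k-q'_k-p'_n+q'_n|$ and are therefore already included in the second group. Hence for the columns $p'=\underline{i}$ and $q'=\underline{j}$ (taken directly, without renormalising),
\begin{equation*}
\dd(\underline{i},\underline{j})=\max_{k,l\in[n]}\bigl|a_{ki}-a_{kj}-a_{li}+a_{lj}\bigr|=\max_{k,l\in[n]}|f_{kl}|.
\end{equation*}
Taking $(k,l)=(i,j)$ in this maximum already yields $\dd(\underline{i},\underline{j})\ge|f_{ij}|$.

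For the reverse inequality I would use the NI triangle--type inequality $a_{\alpha\beta}+a_{\beta\gamma}\le a_{\alpha\gamma}$ from (\ref{eqn:NI}) twice: applied to the triple $(k,i,j)$ it gives $a_{ki}-a_{kj}\le -a_{ij}$, and applied to $(l,j,i)$ it gives $a_{lj}-a_{li}\le -a_{ji}$. Adding these two inequalities,
\begin{equation*}
f_{kl}=(a_{ki}-a_{kj})+(a_{lj}-a_{li})\le -a_{ij}-a_{ji}=f_{ij}.
\end{equation*}
Swapping the roles of $k$ and $l$ in the same argument gives $f_{lk}\le f_{ij}$, i.e.\ $-f_{kl}\le f_{ij}$, so $|f_{kl}|\le f_{ij}$ for every $k,l\in[n]$. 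Finally, Lemma \ref{lem:f} (or directly the inequalities $a_{ij},a_{ji}\le 0$) gives $f_{ij}=-a_{ij}-a_{ji}\ge 0$, so $f_{ij}=|f_{ij}|$. Combining the two bounds yields $\dd(\underline{i},\underline{j})=|f_{ij}|$, as required.

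The only care needed is in the first step, to justify working with the un-normalised column representatives; everything after that is a one-line application of NI twice. I expect no real difficulty, just the bookkeeping between the distance formula, the invariance under $\sim$, and the sign of $f_{ij}$.
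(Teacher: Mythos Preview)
Your proof is correct and uses the same underlying ingredient as the paper --- the NI inequalities $a_{\alpha\beta}+a_{\beta\gamma}\le a_{\alpha\gamma}$ --- but the organisation is slightly different. The paper specialises to $i=1$, $j=2$, shifts the difference vector $\underline{1}-\underline{2}$ so that its first coordinate is $0$, and then uses NI to see that all entries are nonpositive with minimum $a_{12}+a_{21}$; the distance is then read off as $|a_{12}+a_{21}|=|f_{12}|$. You instead first identify $\dd(\underline{i},\underline{j})=\max_{k,l}|f_{kl}|$ (precisely the identity the paper records later as (\ref{eqn:max})) and then bound every $|f_{kl}|$ by $f_{ij}$ via two applications of NI. Your route avoids the WLOG reduction and makes the role of the $f_{kl}$ transparent from the start; the paper's route is a touch more concrete. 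Either way the argument is a couple of lines.
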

\begin{proof}
We can assume $i=1$ and $j=2$, by a change of coordinates. Then, by equivalence in $Q^{n-1}$,
\begin{equation*}
\underline{1}-\underline{2}=\coll{-a_{12}}{a_{21}}{a_{31}-a_{32}}{a_{n1}-a_{n2}}=\coll{0}{a_{21}+a_{12}}{a_{31}-a_{32}+a_{12}}{a_{n1}-a_{n2}+a_{12}}.
\end{equation*}
Entries in the last column are non positive, by (\ref{eqn:NI}), the smallest being $a_{21}+a_{12}\le0$, again by (\ref{eqn:NI}). Thus, $\dd(\underline{1},\underline{2})=|a_{21}+a_{12}|=|f_{12}|$.
\end{proof}

\section{Case $n=4$}\label{sec:n=4}

Assume that $i\neq 4\neq j$.  A generic line $L$ is a semi--labeled trivalent tree on four leaves. It has just one inner edge  $r$. Recall  that  $L$ is of type $\{ij,kl\}$ if and only if $e_{ij}$ is the direction of the edge $r$.   Leaves $i,j$ (resp. $k,l$) lie to one endpoint of $r$ (resp. to the other endpoint).

Recall that $L_{ij}^{A}$ denotes the line determined by columns $i$--th and $j$--th of $A$.

\begin{lem}\label{lem:types} Assume $A\in\R^{4\times 4}$ is a NI matrix.
Let   $\{i,j,k,l\}=[4]$ with  $i<j$. Then
 the type of $L_{ij}^{A}$ is
 not $\{ij,kl\}$; it is $\{ik,jl\}$, $\{il,jk\}$  or $\{1234\}$; (easy to remember: $i$ and $j$ must be separated by the comma, unless the type is $\{1234\}$).
\end{lem}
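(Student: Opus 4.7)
The strategy is to apply Richter's criterion from p.~\pageref{lines}: the type of $L(p,q)$ is $\{ij,kl\}$ precisely when the Pl\"{u}cker sum $m_{ij}+m_{kl}$ is the strict minimum among $m_{12}+m_{34}$, $m_{13}+m_{24}$, $m_{14}+m_{23}$. To rule out this type for $L_{ij}^{A}$ it is therefore enough to prove that $m_{ij}+m_{kl}$ is never strictly smaller than both of the other two sums.

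Simultaneous permutation of the rows and columns of $A$ preserves property (\ref{eqn:NI}) and only relabels the minors $m_{ab}$, so without loss of generality I would assume $(i,j,k,l)=(1,2,3,4)$. With $p=\underline{1}$ and $q=\underline{2}$, the NI inequalities collapse every $2\times 2$ minor involving an index in $\{1,2\}$: from $a_{b1}+a_{12}\le a_{b2}$ one gets $m_{1b}=a_{b2}$; from $a_{b2}+a_{21}\le a_{b1}$ one gets $m_{2b}=a_{b1}$, both for $b\in\{3,4\}$; from $a_{21}+a_{12}\le 0=a_{11}+a_{22}$ one gets $m_{12}=0$. Only $m_{34}=\max\{a_{31}+a_{42},\,a_{41}+a_{32}\}$ stays as a genuine maximum, with no further simplification forced by NI.

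Substituting into the three Pl\"{u}cker sums yields
\begin{equation*}
m_{12}+m_{34}=\max\{a_{31}+a_{42},\,a_{41}+a_{32}\},\qquad m_{13}+m_{24}=a_{32}+a_{41},\qquad m_{14}+m_{23}=a_{42}+a_{31}.
\end{equation*}
Thus $m_{12}+m_{34}$ is the termwise maximum of the other two sums; in particular it is never strictly smaller than both. By Richter's criterion the type of $L_{12}^{A}$ cannot be $\{12,34\}$, and unwinding the relabelling, the type of $L_{ij}^{A}$ cannot be $\{ij,kl\}$. It must therefore be one of $\{ik,jl\}$, $\{il,jk\}$, or $\{1234\}$.

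I do not foresee a real obstacle; the only delicate point is the direction of the NI inequalities $a_{ac}+a_{cb}\le a_{ab}$, which is what forces the term \emph{not} passing through index $1$ or $2$ to dominate in each mixed minor. Conceptually this is the computational trace of the metric--tree intuition that the two columns of an NI matrix chosen as $p$ and $q$ cannot sit on the same cherry of $L_{ij}^{A}$.
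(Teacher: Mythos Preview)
Your proof is correct and follows essentially the same route as the paper's: both reduce to $(i,j)=(1,2)$, use the NI inequalities (\ref{eqn:NI}) to simplify $m_{12}=0$, $m_{1b}=a_{b2}$, $m_{2b}=a_{b1}$, and leave $m_{34}=\max\{a_{31}+a_{42},a_{41}+a_{32}\}$ unsimplified, then observe that $m_{12}+m_{34}$ equals the maximum of the other two Pl\"{u}cker sums, so Richter's criterion rules out type $\{12,34\}$. The paper phrases the final step as a case distinction on which diagonal of $A(34;12)$ realizes $\alpha:=m_{34}$, but the content is identical.
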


\begin{proof}
Without loss of generality,  assume that $i=1$, $j=2$. Write  $p=\underline{1}$, $q=\underline{2}$ and  $L(p,q)=L_{12}^{A}$. Compute
$M$ in (\ref{eqn:M}) and $m$ in (\ref{eqn:m}), using  (\ref{eqn:NI}), to obtain
\begin{equation}\label{eqn:Mm}
M=\left(\begin{array}{ccc}
0&a_{32}&a_{42}\\&a_{31}&a_{41}\\&&\alpha
\end{array}\right),\  m=\max\{\alpha,a_{32}+a_{41},a_{31}+a_{42}\},\ \alpha=|A(34;12)|_{\trop}.
\end{equation}
 Then, the  value $\alpha$ is attained at the main (resp. secondary) (resp. both) diagonal(s) of $A(34;12)=\left[\begin{array}{cc}
a_{31}&a_{32}\\a_{41}&a_{42}
\end{array}\right]$ if and only if  $\alpha=a_{31}+a_{42}$ (resp. $\alpha=a_{32}+a_{41}$) (resp. $a_{31}+a_{42}=a_{32}+a_{41}$) if and only if the type of  $L_{12}$ is $\{13,24\}$ (resp. $\{14,23\}$) (resp. $\{1234\}$). Thus, $L_{12}$ is not $\{12,34\}$.
\end{proof}

Remark: looking at the former proof and definition \ref{dfn:F}, notice that
 the type of $L_{12}^{A}$ is $\{13,24\}$ if and only if  $f_{34}>0$. If the type were $\{14,23\}$, then $f_{34}<0$ and if the type were $\{1234\}$, then $f_{34}=0$. \label{remark_after_proof}

\m
Recall that maximal rays inside a line were defined in p. \pageref{dfn:maximal}.

\begin{lem}\label{lem:nuevo}
Assume $A\in \R^{4\times 4}$ is NI and
let   $\{i,j,k,l\}=[4]$ with  $i<j$. Then the vertices of the line $L_{ij}^{A}$ belong to the tropical segment $\tconv(\underline{i},\underline{j})$. Moreover, $\underline{i}\in r_j$ and $\underline{j}\in r_i$, where $r_i,r_j$ are maximal rays in $L_{ij}^A$.
\end{lem}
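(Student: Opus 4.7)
Without loss of generality permute so that $i=1$ and $j=2$; write $p=\underline{1}$, $q=\underline{2}$, and $L=L_{12}^A$. By lemma \ref{lem:types} the type of $L$ is $\{13,24\}$, $\{14,23\}$, or $\{1234\}$, determined by the sign of $f_{34}$. As in the proof of that lemma, the NI inequalities (\ref{eqn:NI}) collapse the tropical minors to
\[
m_{12}=0,\ m_{13}=a_{32},\ m_{14}=a_{42},\ m_{23}=a_{31},\ m_{24}=a_{41},\ m_{34}=\max\{a_{31}+a_{42},a_{32}+a_{41}\},
\]
and these identities drive every calculation to follow.

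To place each vertex of $L$ inside $\tconv(p,q)$, I substitute the reduced $m_{kl}$ into the explicit vertex formulas from Background --- $v^{13}$ and $v^{24}$ of (\ref{eqn:v^13}) in type $\{13,24\}$, their analogues in type $\{14,23\}$, and $v^{1234}$ in type $\{1234\}$ --- and for each resulting vertex $v$ I exhibit $\lambda,\mu\in\R$ with $\lambda\odot p\oplus\mu\odot q=v$ in $Q^{n-1}$. For instance, in type $\{13,24\}$ (so $f_{34}>0$) the pair $(\lambda,\mu)=(-a_{41},-a_{42})$ recovers $v^{13}$: coordinate $1$ uses $a_{12}+a_{41}\le a_{42}$, coordinate $2$ uses $a_{21}+a_{42}\le a_{41}$, coordinate $3$ uses $f_{34}>0$, and coordinate $4$ is $0$; analogously $(-a_{31},-a_{32})$ recovers $v^{24}$. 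Type $\{14,23\}$ is symmetric after swapping the indices $3$ and $4$, and for type $\{1234\}$ the pair $(a_{42}-a_{41},0)$ yields $v^{1234}$.

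For the ray statements, I claim the half-line $H=\{(0,a_{21}-s,a_{31},a_{41}):s\ge 0\}$ lies in $L$. Granted this, $H$ is a $-e_2$ half-line on $L$ starting at $p$, hence sits inside the unique maximal ray of $L$ in direction $-e_2$, which is $r_2$, and then $p\in r_2$. To check $H\subset L$ I apply the rank--two criterion from Background, computing each tropical $3\times 3$ minor $m_k(x)$, $k\in[4]$, at the running point $x\in H$: in every case two of the three terms coincide thanks to the reductions above (for example the second and third terms of $m_1(x)$ both equal $a_{31}+a_{41}$, and one of the three terms of $m_2(x)$ equals $m_{34}$, which in turn equals another of the three), while the remaining term is dominated by an NI inequality such as $a_{21}+m_{34}\le a_{31}+a_{41}$, $a_{21}+a_{42}\le a_{41}$, or $a_{21}+a_{32}\le a_{31}$. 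The containment $q\in r_1$ then follows from the symmetric argument, interchanging the roles of columns $1$ and $2$.

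The main subtlety is that in the NI setting the tropical Pl\"{u}cker relation among the six $m_{kl}$ is never fully generic --- one of the sums $m_{kl}+m_{k'l'}$ always ties with the maximum --- so the generic parametrization $y_j(u)$ of rays recalled in Background does not in general trace the actual rays of $L$. Consequently the containment $H\subset L$ must be verified directly from the NI inequalities rather than read off from a generic ray formula, and this case-by-case bookkeeping, though elementary, is where the technical work lies.
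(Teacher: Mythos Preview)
Your argument is correct. For the first claim (vertices lie in $\tconv(\underline{1},\underline{2})$) you do exactly what the paper does: substitute the NI--simplified minors into the vertex formulas and exhibit the pair $(\lambda,\mu)$, with the same choices $(-a_{41},-a_{42})$ and $(-a_{31},-a_{32})$ in type $\{13,24\}$.

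For the ray claim you take a genuinely different route. The paper simply subtracts: having already computed $v^{13}$, it forms $\underline{1}-v^{13}=[0,f_{24},0,0]^t$ (with $f_{24}\le 0$ by lemma~\ref{lem:f}), which immediately shows that $\underline{1}$ sits on the $-e_2$ ray emanating from the vertex $v^{13}$; likewise $\underline{2}-v^{24}=[-f_{13},0,0,0]^t$. You instead verify the rank--two criterion along the entire half--line $H=\{(0,a_{21}-s,a_{31},a_{41}):s\ge 0\}$ and then invoke uniqueness of the $-e_2$ leaf. Your approach has the advantage of being type--independent (the four $m_k(x)$ checks go through uniformly, without splitting on the sign of $f_{34}$), so your concern in the final paragraph about non--genericity is handled automatically. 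The paper's approach is shorter once the vertex is in hand, and it delivers the identity $\dd(\underline{1},v^{13})=|f_{24}|$ as a byproduct --- exactly what is needed for theorem~\ref{thm:dist4}, which you would have to recompute separately.
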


\begin{proof}
Without loss of generality,  assume that $i=1$, $j=2$. The points \underline{1} and \underline{2} have coordinates
$$\colu {0}{a_{21}}{a_{31}}{a_{41}},\quad \colu {a_{12}}{0}{a_{32}}{a_{42}},$$
respectively and we know that the coordinates of the vertices of  $L_{12}$  depend on the type of $L_{12}$. This type is not $\{12,34\}$, by lemma \ref{lem:types}.

Say the type of $L_{12}$ is $\{13,24\}$. Then $M$, $m$ and $\alpha$ are shown in (\ref{eqn:Mm}), with
\begin{equation}\label{eqn:alpha}
a_{32}+a_{41}<a_{31}+a_{42}=\alpha.
\end{equation}
Using  (\ref{eqn:v^13}), the vertices of  $L_{12}$ are
\begin{equation}\label{eqn:v^13,v^24}
v^{13}
=\colur{-a_{41}}{-a_{42}}{a_{31}-a_{41}}{0},
\quad v^{24}
=\colur{-a_{31}}{-a_{32}} {0}{a_{42}-a_{32}}.
\end{equation}
We have
\begin{equation*}
v^{13}=\colurr{-a_{41}}{a_{21}-a_{41}}{a_{31}-a_{41}}{0}\oplus \colurr{a_{12}-a_{42}}{-a_{42}}{a_{32}-a_{42}}{0}=\overline{\underline{1}\odot(-a_{41})\oplus\underline{2}\odot(-a_{42})}
\end{equation*}
and
\begin{equation*}
 v^{24}=\colurr{-a_{31}}{a_{21}-a_{31}} {0}{a_{41}-a_{31}}\oplus\colurr{a_{12}-a_{32}}{-a_{32}} {0}{a_{42}-a_{32}}=\overline{\underline{1}\odot(-a_{31})\oplus\underline{2}\odot(-a_{32})},
\end{equation*}
using  inequalities (\ref{eqn:NI}) and (\ref{eqn:alpha}). This shows that $v^{13}$ and $v^{24}$ belong to $\tconv(\underline{1},\underline{2})$.
Moreover
\begin{equation}\label{eqn:1menosv13}
\underline{1}-v^{13}=\colur {a_{41}}{a_{21}+a_{42}}{a_{41}}{a_{41}}=\colur {0}{a_{21}+a_{42}-a_{41}}{0}{0}
=\colur{0}{f_{24}}{0}{0},
\end{equation}
whence  $\underline{1}\in r_2$. Similarly, $\underline{2}-v^{24}=[-f_{13},0,0,0]^t$, whence  $\underline{2}\in r_1$.

\m
Computations are analogous  if the type of line $L_{12}$ is $\{14,23\}$.
\end{proof}

Recall that the tropical distance induces the usual topology. By compactness  of $\tconv(\underline{i},\underline{j})$, there is a vertex in $L_{ij}^{A}$ \emph{closest to $\underline{i}$}, denoted
 $\underline{ij}$, and  a vertex in $L_{ij}^{A}$ \emph{closest to $\underline{j}$},  denoted $\underline{ji}$,  distances considered   tropically.
Of course, $\underline{ji}=\underline{ij}$ if and only if  $L_{ij}$ is $\{1234\}$.

\m
In the following theorem, notice that distances depend on the type of $L_{ij}^{A}$.

\begin{thm}\label{thm:dist4}
Assume $A\in \R^{4\times 4}$ is NI and
let   $\{i,j,k,l\}=[4]$ with  $i<j$.
If the type of the line $L_{ij}^{A}$ is $\{ik,jl\}$, then
\begin{enumerate}
\item $\dd (\underline{i}, \underline{ij})=
|f_{jl}|$,
\item $\dd (\underline{j}, \underline{ji})=
|f_{ik}|$,
\item $\dd (\underline{ij}, \underline{ji})=|f_{kl}|$ (this case is easy to remember).
\end{enumerate}
\end{thm}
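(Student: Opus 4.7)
The plan is to reduce, as in the proof of Lemma~\ref{lem:nuevo}, to the case $i=1$, $j=2$ and to treat the single sub-case where the type of $L_{12}^A$ is $\{13,24\}$; the sub-case $\{14,23\}$ is identical after interchanging the labels $3$ and $4$. Under this reduction the remark following the proof of Lemma~\ref{lem:types} gives $f_{34}>0$, and Lemma~\ref{lem:nuevo} (together with its proof) furnishes the two vertices $v^{13}$ and $v^{24}$ of $L_{12}^A$ in the explicit form~(\ref{eqn:v^13,v^24}).

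For item~(1), the required displacement has already been computed inside the proof of Lemma~\ref{lem:nuevo}: equation~(\ref{eqn:1menosv13}) states $\underline 1-v^{13}=[0,f_{24},0,0]^t$, and by Lemma~\ref{lem:f} one has $f_{24}\le 0$. Since only one coordinate of the representative is non-zero, definition~(\ref{eqn:dist}) gives $\dd(\underline 1,v^{13})=|f_{24}|$, and the same displacement shows that $\underline 1$ lies on the ray $r_2$ whose endpoint is $v^{13}$. Hence, by additivity of $\dd$ along the tree path $\underline 1\to v^{13}\to v^{24}$, the vertex of $L_{12}^A$ closest to $\underline 1$ is $v^{13}$, so $\underline{12}=v^{13}$ and item~(1) follows.

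Item~(2) is obtained by the mirror calculation: subtracting the formula~(\ref{eqn:v^13,v^24}) for $v^{24}$ from the column vector $\underline 2=[a_{12},0,a_{32},a_{42}]^t$, passing to a representative in $Q^3$ whose last coordinate is $0$, and using $a_{22}=0$, one finds $\underline 2-v^{24}=[-f_{13},0,0,0]^t$; since $f_{13}\ge 0$ by Lemma~\ref{lem:f}, the same reasoning as above identifies $\underline{21}=v^{24}$ and yields $\dd(\underline 2,v^{24})=|f_{13}|$. Finally, item~(3) is the shortest: subtracting the two formulas in~(\ref{eqn:v^13,v^24}) and normalising to a representative with vanishing last coordinate gives $v^{13}-v^{24}=[f_{34},0,f_{34},0]^t$ in $Q^3$, whence~(\ref{eqn:dist}) yields $\dd(v^{13},v^{24})=|f_{34}|$. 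All three distances thus fall out, as single-term quantities, from the explicit vertex formulas; the only conceptual step is the identification of $\underline{ij}$ and $\underline{ji}$ with the appropriate vertex produced by tropical Cramer's rule, and this identification is entirely furnished by Lemma~\ref{lem:nuevo}, so no real obstacle remains.
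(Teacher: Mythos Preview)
Your argument is correct and follows essentially the same route as the paper: reduce to $i=1$, $j=2$, type $\{13,24\}$, read off the three displacements from the explicit vertices~(\ref{eqn:v^13,v^24}), and use Lemma~\ref{lem:f} for the signs. The only cosmetic difference is in the identification $\underline{12}=v^{13}$, $\underline{21}=v^{24}$: the paper computes $\dd(\underline 2,v^{13})=f_{14}$ directly and compares it to $\dd(\underline 2,v^{24})=f_{13}$, whereas you invoke additivity of $\dd$ along the tree path (together with $f_{34}>0$), which is equally valid.
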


\begin{proof}
Without loss of generality,  assume that $i=1$, $j=2$. We know that the  type of $L_{12}$ is not $\{12,34\}$, by lemma \ref{lem:types}.

Say the type of $L_{12}$ is $\{13,24\}$, so that $k=3$, $l=4$.   By definition of $F$ and (\ref{eqn:alpha}), we have $f_{34}>0$.  Go back to (\ref{eqn:v^13,v^24}), where coordinates for $v^{13}$ and $v^{24}$ were computed, to get
\begin{equation*}
v^{13}-v^{24}=\colur{a_{31}-a_{41}}{a_{32}-a_{42}}{a_{31}-a_{41}}{a_{32}-a_{42}}
=\colur{a_{31}-a_{41}-a_{32}+a_{42}}{0}{a_{31}-a_{41}-a_{32}+a_{42}}{0}
=\colur{f_{34}}{0}{f_{34}}{0}
\end{equation*}
and we obtain
\begin{equation*}
\dd(v^{13},v^{24})=f_{34}.
\end{equation*} Moreover, from (\ref{eqn:1menosv13}) and lemma \ref{lem:f} (for $j=2$),  we get
\begin{equation*}\label{eqn:d1v^13}
\dd(\underline{1},v^{13})=-f_{24}=|f_{24}|,
\end{equation*}
Similarly,
\begin{equation*}
\dd(\underline{2},v^{24})=f_{13}=|f_{13}|.
\end{equation*}
Now
 \begin{equation*}
\underline{2}-v^{13}=\colur{a_{12}+a_{41}}{a_{42}}{a_{32}+a_{41}-a_{31}}{a_{42}}=\colur{a_{12}+a_{41}-a_{42}}{0}{a_{32}+a_{41}-a_{31}-a_{42}}{0}
=\colur{-f_{14}}{0}{-f_{34}}{0}.
\end{equation*}
By additivity (\ref{eqn:addit}), we have $f_{13}+f_{34}=f_{14}$,  with $f_{13}\ge0$, $f_{14}\ge0$ and $f_{34}>0$. Thus, by the definition of tropical distance, we get
\begin{equation*}
\dd(\underline{2},v^{13})=\max\{f_{14}, f_{34}, f_{13}\}=f_{14}.
\end{equation*}
We have $\dd(\underline{2},v^{24})=f_{13}<f_{14}=\dd(\underline{2},v^{13})$,
showing that $v^{24}$ is closer to $\underline{2}$ than $v^{13}$. Thus we can relabel as follows
\begin{equation*}
v^{24}=\underline{21},\qquad v^{13}=\underline{12}.
\end{equation*}
This proves the three statements for type $\{13,24\}$.
Computations are similar  if the type of $L_{12}$ is $\{14,23\}$.
\end{proof}

\begin{ex}\label{ex:1}
Assume that $*\in\R$ are such that $A$ is NI, with  $$A=\left[\begin{array}{rrcc}
0&-12&*&*\\
-10&0&*&*\\
-11&-14&0&*\\
-15&-13&*&0
\end{array}\right],$$
 (this can be achieved, for instance, taking   $-20\le a_{kl}\le -10$,  for $k,l=3,4$ and $k\neq l$). We have
\begin{equation*}
F=\left(\begin{array}{rrr}
22&9&14\\
&-13&-8\\
&&5
\end{array}\right)
\end{equation*}
and $\dd(\underline{1},\underline{2})=22$, by lemma \ref{lem:dist_given}.
By  the last part in theorem \ref{thm:dist4}, we get
 \begin{equation*}
 \dd(\underline{12},\underline{21})=|f_{34}|=5\neq0,
 \end{equation*}
 whence the type of $L_{12}$ is not $\{1234\}$. It can be either $\{13,24\}$ or $\{14,23\}$, since 1 and 2 must be separated by the comma,  by lemma \ref{lem:types}. We have
 \begin{equation*}
 \underline{1}-\underline{2}=\colur{12}{-10}{3}{-2}=\colur{14}{-8}{5}{0},\quad \dd(\underline{1},\underline{2})=22.
 \end{equation*}
 If the type were $\{14,23\}$, by theorem \ref{thm:dist4} we would have
 \begin{equation*}
 \dd(\underline{1},\underline{12})=|f_{23}|=13,\quad \dd(\underline{2},\underline{21})=|f_{14}|=14,\quad 22\neq 13+5+14,
 \end{equation*}
 contradicting that the tropical distance is additive for three tropically collinear points.
 Thus the type is $\{13,24\}$ and then
 \begin{equation*}
 \dd(\underline{1},\underline{12})=|f_{24}|=8,\quad \dd(\underline{2},\underline{21})=|f_{13}|=9,\quad 22= 8+5+9.
 \end{equation*}

A longer way to obtain the same result is computing $M, m$ and $\alpha$ in (\ref{eqn:Mm}). We get  that the type of $L_{12}^A$ is   $\{13,24\}$, and then formulae (\ref{eqn:v^13}) provide the coordinates of \underline{12} and \underline{21}.
\end{ex}


\begin{cor}\label{cor:adiffs} Assume $A\in \R^{4\times 4}$ is NI and
let     $1\le i<j\le 4$.
If the type of the line $L_{ij}^A$ is $\{1234\}$, then for $k\in[4]\setminus \{i,j\}$ we have
\begin{enumerate}
\item $\dd (\underline{i}, \underline{ij})=
|f_{jk}|$,
\item $\dd (\underline{j}, \underline{ji})=
|f_{ik}|$. \qed
\end{enumerate}
\end{cor}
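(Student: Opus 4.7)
The plan is to treat this corollary as a degenerate limit of Theorem \ref{thm:dist4}. First, by relabeling columns of $A$, I reduce to the case $i=1$, $j=2$, so that $\{k,l\}=\{3,4\}$. In this situation the remark immediately following Lemma \ref{lem:types} tells us that the type of $L_{12}^A$ is $\{1234\}$ exactly when $f_{34}=0$. Additivity of $F$ as stated in (\ref{eqn:addit}) then gives $f_{13}=f_{14}$ and $f_{23}=f_{24}$, so the proposed values $|f_{jk}|$ and $|f_{ik}|$ do not depend on the choice of $k\in\{3,4\}$. Hence it suffices to verify the formulas for one choice of $k$; I would take $k=4$.

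Second, when the type is $\{1234\}$ the inner edge $r$ collapses to a point, so the two closest vertices coincide: $\underline{12}=\underline{21}=v^{1234}$, the unique vertex of $L_{12}^A$. Its coordinates are given explicitly in the excerpt as $v^{1234}=[m_{13}+m_{14}-m_{34},m_{12},m_{13},m_{14}]^t$. Substituting the NI values from (\ref{eqn:Mm}), namely $m_{12}=0$, $m_{13}=a_{32}$, $m_{14}=a_{42}$ together with $\alpha=a_{31}+a_{42}=a_{32}+a_{41}$ (both equalities hold in type $\{1234\}$), I obtain $v^{1234}=[a_{32}-a_{31},0,a_{32},a_{42}]^t$. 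Forming $\underline{1}-v^{1234}$ and normalizing its last coordinate to zero, exactly as in the proof of Theorem \ref{thm:dist4}, leaves a vector whose entries are $f_{34}=0$, $f_{24}$, $f_{34}=0$, $0$; by Lemma \ref{lem:f} we have $f_{24}\le 0$, so $\dd(\underline{1},v^{1234})=|f_{24}|$. A symmetric calculation starting from $\underline{2}-v^{1234}$ yields $\dd(\underline{2},v^{1234})=|f_{13}|$.

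A tidier alternative, which I would at least mention, is a continuity/perturbation argument: perturb $A$ infinitesimally so that $f_{34}>0$, placing $L_{12}^A$ in generic type $\{13,24\}$; apply parts (1) and (2) of Theorem \ref{thm:dist4} to obtain $\dd(\underline{1},\underline{12})=|f_{24}|$ and $\dd(\underline{2},\underline{21})=|f_{13}|$; then pass to the limit, using continuity of the vertex coordinates (\ref{eqn:v^13}) and of the $f_{kl}$ in the entries of $A$, plus the fact that $\underline{12}$ and $\underline{21}$ both tend to $v^{1234}$ as $f_{34}\to 0^+$. I do not expect a genuine obstacle in either route, since the statement is morally a boundary case of Theorem \ref{thm:dist4}; the only subtlety worth flagging is the well-posedness of the formula under the choice of $k$, which is precisely what additivity of $F$ delivers in the first paragraph of the plan.
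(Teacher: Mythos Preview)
Your proposal is correct and matches the paper's intent: the corollary carries only a \qed\ with no written argument, signalling that it is meant to drop out of Theorem~\ref{thm:dist4} immediately. Both routes you sketch---the direct computation with $v^{1234}$ mirroring the proof of Theorem~\ref{thm:dist4}, and the limiting argument as $f_{34}\to 0$---are exactly the kind of one-line justification the \qed\ is standing in for, and your observation via additivity~(\ref{eqn:addit}) that the formula is independent of the choice of $k$ is the only point that genuinely needs saying.
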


\section{General case}\label{sec:general}
Our aim for this section is to describe the tree $L(p,q)$ through the matrix $F$.
Let $pq$ (resp. $qp$) denote the vertex  closest to $p$ (resp $q$) in $L(p,q)$, if such a vertex  exists. These two are the only inner vertices of the line $L(p,q)$ that we will \emph{consistently label}. Vertices of $L(p,q)$ may receive temporary labels, such as $v,w,x,y,z$ etc.

\m
Let $A\in \R^{n\times n}$ be a NI matrix. For the rest of the paper, we assume that $p=\underline{1}$ and $q=\underline{2}$, so that $L(p,q)=L(\underline{1},\underline{2})=L_{12}^A$. This is no loss of generality. If $F$ is as in definition \ref{dfn:F}, then
\begin{equation}\label{eqn:pos_negat}
f_{1k}\ge0, \qquad f_{2k}\le0,\qquad \forall k
\end{equation}
\begin{equation}\label{eqn:max}
f_{12}=\max_{1\le k< l\le n} |f_{kl}|
\end{equation}
 by lemma \ref{lem:f} and the NI condition (\ref{eqn:NI}).


\m

\label{not:^s} Notation: For  $3\le s\le n$, let $A^s$ (resp. $F^s$) denote the principal  minor of $A$ (resp. of $F$) of order $s$; in particular, $A^n=A$. The first two columns of $A^s$  are denoted $\underline{1}^s$ and $\underline{2}^s$. The   line $L(\underline{1}^s,\underline{2}^s)$  is denoted $L^s$. It sits inside $Q^{s-1}$, which can be identified with $H_s\simeq \R^{s-1}$. In particular, $L^n=L(p,q)$.
Let $\underline{12}^s$ (resp. $\underline{21}^s$) denote the vertex of $L^s$ closest to $\underline{1}^s$ (resp. to $\underline{2}^s$), if such a vertex  exists. Let $r_j^s$ denote any ray in the $e_j$ negative sense inside  $\R^{s-1}$,  for $j\in[s-1]$, and  $r_s^s$ any ray in the $e_{12\ldots s-1}$ positive sense.
We know that $L^s$ is the finite union of $s$ rays $\sucenn {r^s}s$ and some edges $\sucenn ht$, for certain  $t\in \N\cup\{0\}$.

\begin{dfn}\label{dfn:active}
Fix $s$ with $3\le s\le n$. If, for some $1\le k< l\le s$,  $|f_{kl}|$ equals either the distance between two adjacent vertices in $L^s$  or it equals $\dd(\underline{1}^s,\underline{12}^s)$ or  $\dd(\underline{21}^s,\underline{2}^s)$, then we will say that $f_{kl}$ is $s$--\emph{active}.
\end{dfn}

\begin{dfn}\label{dfn:fracture}
If $|a|=|b|+|c|$ with non--zero $a,b,c\in\R$, we say that $a$ \emph{fractures by means of $b$.} We also say that $a$ was \emph{formerly active} and that  $b,c$ are \emph{newly active.} 
\end{dfn}

\m
Consider the matrix $F^s$ and assume that  $f_{kl}$ is $(s-1)$--active, with $1\le k<l\le s-1$. Then,  $f_{kl}$ fractures by means of some entry of the $s$--th column, if and only if
\begin{equation}\label{eqn:ineq}
|f_{kl}|>|f_{ks}|.
\end{equation} Indeed,  we will have $|f_{kl}|=|f_{ks}|+|f_{ls}|$, following from  additivity (\ref{eqn:addit}). In practice, to find out if a fracture occurs by means of some entry of the $s$--th column, we can minimize the absolute value of the entries of the $s$--th column of $F^s$. 

\begin{lem}\label{lem:crossed}
Let $A\in\R^{n\times n}$ be NI and $3\le s\le n$. 
Then point $\underline{2}^s$ lies to the northwest of $\underline{1}^s$ inside $H_n\subset \R^{n}$.
\end{lem}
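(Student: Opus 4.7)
The plan is to pin down a canonical representative of each of $\underline{1}^s$ and $\underline{2}^s$ in the affine chart $H_s\simeq\R^{s-1}$ (I presume the ``$H_n$'' in the statement is a typo for $H_s$, since $\underline{1}^s,\underline{2}^s\in Q^{s-1}$) and then read off ``northwest'' as two coordinatewise inequalities. I interpret northwest in the standard planar way: viewing the first two of the $s-1$ coordinates of $\R^{s-1}$ as the horizontal and vertical axes (consistent with Figure~\ref{fig:01}), the claim is that the first coordinate of $\underline{2}^s$ is no larger than that of $\underline{1}^s$ while the second coordinate of $\underline{2}^s$ is no smaller.

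First I would normalize so that the last coordinate vanishes. Subtracting $a_{s1}$ from the representative $(0,a_{21},a_{31},\ldots,a_{s1})^t$ of $\underline{1}^s$ yields $\underline{1}^s_H=(-a_{s1},\,a_{21}-a_{s1},\,\ldots,\,0)^t$, and analogously $\underline{2}^s_H=(a_{12}-a_{s2},\,-a_{s2},\,\ldots,\,0)^t$. Comparing the first and second entries of these two representatives reduces the lemma to the pair of inequalities $a_{12}-a_{s2}\le -a_{s1}$ and $a_{21}-a_{s1}\le -a_{s2}$.

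Each of these is a single instance of the NI subadditivity~(\ref{eqn:NI}): the first rearranges to $a_{s1}+a_{12}\le a_{s2}$, an application of (\ref{eqn:NI}) with $(i,k,j)=(s,1,2)$; the second rearranges to $a_{s2}+a_{21}\le a_{s1}$, an application of (\ref{eqn:NI}) with $(i,k,j)=(s,2,1)$. Since $s\ge 3$ both triples $\{i,j,k\}$ have three distinct elements, so each invocation of the axiom is legitimate. There is no real obstacle in the argument; the only point that deserves a word is fixing the meaning of ``northwest'' when $s-1>2$, since the term is inherently two-dimensional. Once the convention on the first two coordinates is fixed, both inequalities drop out directly and independently from the NI structure of $A$.
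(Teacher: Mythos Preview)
Your proof is correct and follows essentially the same approach as the paper: normalize both points to have vanishing last coordinate, then compare the first two entries using the NI subadditivity inequalities $a_{s1}+a_{12}\le a_{s2}$ and $a_{s2}+a_{21}\le a_{s1}$. Your observation that the statement should read $H_s$ rather than $H_n$, and your remark about the two-dimensional meaning of ``northwest'', are both well taken.
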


\begin{proof}
By equivalence in $Q^{n-1}$, the coordinates of $\underline{1}^s$ and $\underline{2}^s$ in $H_n$ are
\begin{equation*}
\cols{-a_{s1}}{a_{21}-a_{s1}}{a_{31}-a_{s1}}{\vdots}{a_{s-1,1}-a_{s1}}{0},\quad
\cols{a_{12}-a_{s2}}{-a_{s2}}{a_{32}-a_{s2}}{\vdots}{a_{s-1,2}-a_{s2}}{0}
\end{equation*}
where the  first and second coordinates compare as follows:
 $$-a_{s1}\ge a_{12}-a_{s2},$$
 $$a_{21}-a_{s1}\le -a_{s2},$$ by (\ref{eqn:NI}). This implies the result. 
\end{proof}

\begin{thm}\label{thm:tree}
Let $n\ge3$ and assume $p,q$ are different points in $Q^{n-1}$ having representatives $p',q'$ in $\R^n$ whose coordinates are the first and second columns of a NI matrix $A\in\R^{n\times n}$. 
Then the matrix $F$, as in   definition \ref{dfn:F}, describes  the line $L(p,q)$ as a balanced unrooted  semi--labeled tree on  $n$ leaves, which is caterpillar.   Every vertex in $L(p,q)$ belongs to $\tconv(p,q)$. The vertices $pq$ and $qp$ exist in $L(p,q)$. The distances between pairs of adjacent vertices in $L(p,q)$ and the distances $\dd(p,pq)$, $\dd(qp,q)$  and $\dd(p,q)$ are certain entries of the matrix $|F|$. In addition,  if $p$ and $q$ are generic, then $L(p,q)$ is trivalent.
\end{thm}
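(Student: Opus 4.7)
I would prove the theorem by induction on $s$, running $s$ from $3$ up to $n$, so that the full statement about $L(p,q)=L^n$ is reached in $n-3$ tropical modification steps. The base cases $s=3$ and $s=4$ are already in place: for $s=3$ the line $L^3$ is a tripod in $Q^2$ whose unique inner vertex is supplied by the tropical Cramer rule (\ref{eqn:Cramer}), and the distances to $\underline 1^3$ and $\underline 2^3$ are read off as $|f_{23}|$ and $|f_{13}|$ via the NI inequalities (\ref{eqn:NI}) exactly as in the proof of lemma~\ref{lem:dist_given}; the case $s=4$ is the combined content of lemma~\ref{lem:types}, lemma~\ref{lem:nuevo}, theorem~\ref{thm:dist4} and corollary~\ref{cor:adiffs}.

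For the inductive step, I would view $L^s$ as a tropical modification of $L^{s-1}$ in the sense of \cite{Brugalle_fran,Brugalle_engl,Mikhalkin_T}: the projection that forgets the $s$-th coordinate carries $L^s\subset H_s$ onto $L^{s-1}\subset H_{s-1}$, and the preimage adds a single new leaf $r_s^s$ attached at one specific point. Lemma~\ref{lem:crossed} controls the relative position of $\underline 1^s$ and $\underline 2^s$, and together with the style of argument in lemma~\ref{lem:nuevo} it guarantees that the attachment point lies on $\tconv(\underline 1^s,\underline 2^s)$; this is what upgrades the inductive conclusion ``every vertex belongs to $\tconv$'' from $L^{s-1}$ to $L^s$. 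The heart of the step is to locate the attachment point via the new column $(f_{ks})_{k<s}$ of $F^s$: as noted just before the theorem, an $(s-1)$-active entry $f_{kl}$ fractures by means of $f_{ks}$ precisely when $|f_{kl}|>|f_{ks}|$, in which case additivity (\ref{eqn:addit}) delivers $|f_{kl}|=|f_{ks}|+|f_{ls}|$. Minimising $|f_{ks}|$ across the allowed indices picks out the unique formerly active entry that fractures, that is, the unique edge of $L^{s-1}$ that is subdivided, and the two numbers $|f_{ks}|,|f_{ls}|$ become the two newly $s$-active distances.

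Once this localisation is in hand the remaining claims become bookkeeping. The caterpillar property propagates because each modification subdivides at most one inner segment, so a spine through all vertices of degree $\ge 2$ in $L^{s-1}$ extends, by adjoining at most one new segment, to such a spine in $L^s$. The set of $s$-active entries of $|F^s|$ is obtained by keeping all $(s-1)$-active entries except the one that fractured and appending $|f_{ks}|,|f_{ls}|$; the formula $\dd(p,q)=|f_{12}|$ is already lemma~\ref{lem:dist_given}, and the sign pattern (\ref{eqn:pos_negat})--(\ref{eqn:max}) guarantees that $|f_{12}|$ remains the total length of $\tconv(p,q)$ throughout. Genericity of $p,q$ translates, via (\ref{eqn:ineq}), into strict inequalities $|f_{kl}|\neq|f_{ks}|$ at every step, so every modification creates a genuinely new trivalent vertex and the final tree $L(p,q)$ is trivalent. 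The main obstacle I expect is the rigorous verification that exactly one entry fractures at each step and is placed at the correct spot along the spine; this is where the full strength of the NI inequalities (\ref{eqn:NI}) in the form of lemma~\ref{lem:f} and the sign constraints (\ref{eqn:pos_negat}) must be invoked to rule out simultaneous fractures and to prevent the construction from drifting into non-caterpillar shapes such as snowflake trees.
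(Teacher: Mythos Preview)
Your proof plan is correct and follows essentially the same approach as the paper: recursion on $s$ from $3$ to $n$, each step realised as a tropical modification of $L^{s-1}$ in which exactly one $(s-1)$--active entry fractures, the fracture being located by minimising $|f_{ks}|$ over the new column, with base cases $s=3,4$ supplied by Cramer's rule and theorem~\ref{thm:dist4}, and the non--generic case recovered by perturbation. The paper makes explicit the one point you flag as the main obstacle, namely that when the new ray $r_s^s$ sprouts from a leaf ray $r_j'$ of $L^{s-1}$ one must have $j\in\{1,2\}$ (since $\underline 1'\in r_2'$ and $\underline 2'\in r_1'$ from step $s=4$ onward), and this is precisely what keeps the tree caterpillar.
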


\begin{proof}
We have $p=\underline{1}$ and $q=\underline{2}$ and $\dd(\underline{1},\underline{2})=f_{12}$, by lemma \ref{lem:dist_given} and  (\ref{eqn:pos_negat}). Write $L=L(\underline{1},\underline{2})=L_{12}^A$.

\m
First, let us assume that the couple $p, q$ is \emph{generic.}  Then, $L$ and $F$ are also generic.

With notation from p. \pageref{not:^s}, let us begin with the line  $L^2$, joining the points
$\left[\begin{array}{cc}
0\\
a_{21}
\end{array}\right]=\left[\begin{array}{cc}
-a_{21}\\
0
\end{array}\right]$ and
$\left[\begin{array}{cc}
a_{12}\\0
\end{array}\right]$.
Then
$$\dd(\underline{1}^2, \underline{2}^2)=|a_{12}+a_{21}|=f_{12},$$
by lemma \ref{lem:dist_given}. We have $f_{12}\neq0$, by genericity of $F$ and $f_{12}$ is 2--active. This is the initial step.

\m
The proof proceeds by recursion,  for $3\le s\le n$. In the $s$-- th step,
the  line $L^s$ is obtained from the line  $L^{s-1}$, by tropical modification. This precisely means that  exactly one $(s-1)$--active entry of $F^{s-1}$ fractures.
Moreover, after  the $s$--th step is completed, we have the following properties:
\begin{enumerate}  \label{prop:list}
\item in each row of $F^s$, there is some $s$--active entry,\label{prop:some_in_row}
\item there are exactly two $s$--active entries in the last column of $F^s$; these are newly active, \label{prop:two_in_last}
\item there are some negative  and some positive $s$--active entries in $F^s$,\label{prop:signs}
\item the sum of the absolute values of all $s$--active entries  in $F^s$ is equal to $f_{12}$.\label{prop:sum}
\end{enumerate}

\begin{itemize}\label{eqn:Cramer2}
\item if $s=3$, then $f_{12}+f_{23}=f_{13}$, by additivity (\ref{eqn:addit}). By (\ref{eqn:pos_negat}) and (\ref{eqn:max}),
$$|f_{12}|=|f_{13}|+|f_{23}|$$ is a fracture of $\dd(\underline{1}^2,\underline{2}^2)=f_{12}$. The line $L^3$ has a vertex, which we denote $w^3$, whose coordinates are given in (\ref{eqn:Cramer})
\begin{equation*}
w^3=\colt{-m_{23}}{-m_{13}}{-m_{12}}=\colt{-a_{31}}{-a_{32}}{0}=\overline{\underline{1}\odot(-a_{31})\oplus\underline{2}\odot(-a_{32})},
\end{equation*}
 equalities holding by the NI hypothesis.  Then
\begin{equation}\label{eqn:norte_este}
\underline{1}^3-w^3=\colt{a_{31}}{a_{32}+a_{21}}{a_{31}}=\colt{0}{f_{23}}{0}, \quad\underline{2}^3-w^3=\colt{a_{31}+a_{12}}{a_{32}}{a_{32}}=\colt{-f_{13}}{0}{0}
\end{equation}
whence
\begin{equation*}
\dd(\underline{1}^3,w^3)=|f_{23}|=-f_{23},\quad \dd(\underline{2}^3,w^3)=|f_{13}|=f_{13}.
\end{equation*}
Now $f_{13},f_{23}$ become 3--active, while $f_{12}$ stops being active.

Equalities (\ref{eqn:norte_este}) tell us that walking northbound from point $\underline{1}^3$ for $|f_{23}|$ units, we reach $w^3$, and walking eastbound from point $\underline{2}^3$ for $f_{13}$ units, we also reach $w^3$; see figure \ref{fig:05}, left. The line $L^3$ satisfies the statement of the theorem and it is trivalent.

\item if $s=4$, there are two cases: either $f_{34}<0$ or $f_{34}>0$ (by genericity of $F$, we have $f_{34}\neq0$). Both cases were studied in theorem \ref{thm:dist4}. Being generic, the tree $L^4$ is  of type $\{13,24\}$ or $\{14,23\}$, by lemma \ref{lem:types}. This means that leaves 1 and 2 are separated already at step $s=4$. They will remain separated ever after. In particular, we will have
    \begin{equation}
    \underline{1}^s\in r_2^s,\qquad\underline{2}^s\in r_1^s, \qquad \forall s\ge4.
    \end{equation}
    The fracture is
    \begin{equation}
    \dd(\underline{1}^3,w^3)=|f_{23}|=|f_{24}|+|f_{34}|,\qquad \text{if\ }f_{34}<0
    \end{equation}
    or
    \begin{equation}
    \dd(\underline{2}^3,w^3)=|f_{13}|=|f_{14}|+|f_{34}|,\qquad \text{if\ }f_{34}>0.
    \end{equation}

\begin{figure}[h]
 \centering
  \includegraphics[width=14cm,keepaspectratio]{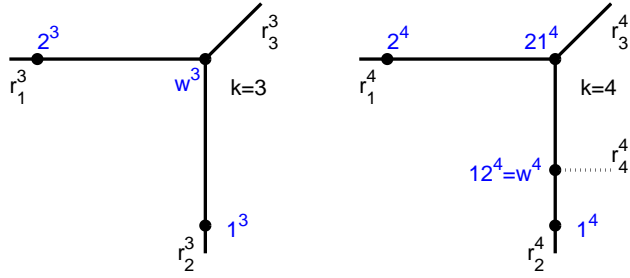}\\
  \caption{Modification and fracture occurring at step $s=4$, when $f_{34}<0$.}
  \label{fig:04}
  \end{figure}

In the previous two steps ($s=3$ or $s=4$) two entries in the last column of $F^s$ became $s$--active, while  one entry of $F^{s-1}$ stopped being active, due to the fracture. Moreover, properties \ref{prop:some_in_row} to  \ref{prop:sum}  in p. \pageref{prop:some_in_row} hold true.
\end{itemize}

\m
Assuming that properties \ref{prop:some_in_row} to  \ref{prop:sum} hold at step $(s-1)$, notice that exactly one fracture of one $(s-1)$--active entry of $F^{s-1}$ occurs at step $s$, for each $5\le s\le n$. Indeed, recall  (\ref{eqn:ineq}) and
 consider $i\in [s-1]$ ($i$ depending on $s$) such that
 \begin{equation}
 |f_{is}|=\min_{k\in [s-1]}|f_{ks}|.
 \end{equation}
 By genericity of $F$, such an index $i$ is unique and thus, some $(s-1)$--active entry on the $i$--th row of $F^s$ fractures.
 We have only one fracture at step $s$,  due to properties
\ref{prop:some_in_row} to \ref{prop:sum} and the fact that equalities (\ref{eqn:addit}) are not independent, for a fixed $s$.

\m
Now we proceed to describe $L$ as a tree, based on data in $F$. Assume, by recursion, that we have described the tree $L^{n-1}$  and that $L^{n-1}$ is trivalent.
Write $L'$ instead of $L^{n-1}$, for simplicity (similar meaning  for  $p'$, $q'$, $F'$, etc.).
Being trivalent, $L'$ is described by a finite family of bipartitions of $[n-1]$: 
\begin{equation*}
\{S_1,S_1^c\},\{S_2,S_2^c\},\ldots,\{S_t,S_t^c\},
\end{equation*}
where $t=n-4$  
is the number of inner edges of $L'$ (by recursion),  $S_j\subset[n-1]$, with $\card S_j\ge2$ and $\card S_j^c\ge2$ (by trivalency).  Moreover, the distances between pairs of adjacent vertices in $L'$ and the distances $\dd(p',p'q')$, $\dd(q'p',q')$ and $\dd(p',q')$ are certain entries of $|F'|$.
Now, the tree $L$ is a \emph{tropical modification}   of $L'$. That means that a ray $r_n^n$ sprouts up from  $L'$ at some point  of $L'$,  labeled $w$ temporarily, with the \emph{balancing condition} holding at $w$ inside $L$. The point $w$ becomes a vertex  of $L$ (although, it is not a vertex  in $L'$). By genericity, we face two cases:
\begin{enumerate}
\item \label{case:1} \emph{If $w$ belongs to the \emph{relative interior} of some inner edge $r$ of $L'$.} Say this segment corresponds to the bipartition $\{S_t,S_t^c\}$. We know that the leaves 1 and 2 are separated since step $s=4$, so that
    $$\{1,2\}\cap S_t \neq\emptyset\ \text{and}\ \{1,2\}\cap S_t^c\neq\emptyset.$$
    Say $1\in S_t$  and $2\in S_t^c$.  Removal of the relative interior of $r$
    splits the tree $L'$ into two subtrees, $L'_1$ and $L'_2$, named so that 1 is a leaf in $L'_1$.
    Then, the tree $L$ is described by
\begin{equation*}
\{\widehat{S_1},\widehat{S_1^c}\},\ldots,\{\widehat{S_{t-1}},\widehat{S_{t-1}^c}\},\{S_t\cup\{n\},S_t^c\},\{S_t,S_t^c\cup\{n\}\},
\end{equation*}
where
$$\widehat{S}=\begin{cases}
S\cup\{n\},&\text{ if $S^c$ is a subset of leaves of $L'_1$ or of $L'_2$,}\\S,&\text{otherwise.}
\end{cases}$$
Moreover, we know that the endpoints of $r$ are vertices of $L'$: let us label them $v_1, v_2$ temporarily, so that   $v_1\in L'_1$. Then
    \begin{equation*}
    \dd(v_1,v_2)=|f_{kl}|,
    \end{equation*}
    for some  $1\le k<l\le n-1$ and so  $f_{kl}$ is $(n-1)$--active. Due to tropical modification,  this  entry fractures, yielding
    \begin{equation*}
    |f_{kl}|=|f_{kn}|+|f_{ln}|
    \end{equation*}
    and so
    \begin{equation}\label{eqn:disj_11}
    \dd(v_1,w)=|f_{ln}|,\quad \dd(v_2,w)=|f_{kn}|,
    \end{equation}
    or
\begin{equation}\label{eqn:disj_12}
    \dd(v_1,w)=|f_{kn}|,\quad \dd(v_2,w)=|f_{ln}|.
    \end{equation}
    We decide  between (\ref{eqn:disj_11}) and (\ref{eqn:disj_12}) by computing the coordinates of $w$ in two different ways: beginning from $\underline{1}$ and beginning from $\underline{2}$.

\item \label{case:2} \emph{If $w$ belongs to the \emph{relative interior} of a ray $r_j'$, some $j\in[n-1]$.} Then $L$ is given by
\begin{equation*}
\{\{j,n\},
\{1,\ldots,j-1,j+1,\ldots,n-1\}\},
\{\widehat{S_1},\widehat{S_1^c}\},\{\widehat{S_2},\widehat{S_2^c}\},\ldots,\{\widehat{S_t},\widehat{S_t^c}\},
\end{equation*}
where
$$\widehat{S}=\begin{cases}
S\cup\{n\},&\text{ if $j\in S$,}\\S,&\text{otherwise.}
\end{cases}$$
Due to tropical modification,    one fracture of one $(n-1)$--active $f_{kl}$ occurs:
$$|f_{kl}|=|f_{kn}|+|f_{ln}|.$$ By recursion, we have
$|f_{kl}|=\dd(\underline{1}',\underline{12}')$ or $|f_{sl}|=\dd(\underline{2}',\underline{21}')$, and
recalling that  $\underline{1}'\in r_2'$ and $\underline{2}'\in r_1'$ (this holds true  since step $s=4$), we get
\begin{equation}\label{eqn:j}
j=2\text{\ or\ }j=1.
\end{equation}
\begin{itemize}
\item If $|f_{kl}|=\dd(\underline{1}',\underline{12}')$, then  $j=2$. We relabel $w$ as \underline{12}, relabel $\underline{12}'$ as $v$ and obtain
\begin{equation}\label{eqn:disj_21}
    \dd(\underline{1},\underline{12})=|f_{ln}|,\quad \dd(\underline{12},v)=|f_{kn}|,
    \end{equation}
    or
\begin{equation}\label{eqn:disj_22}
    \dd(\underline{1},\underline{12})=|f_{kn}|,\quad \dd(\underline{12},v)=|f_{ln}|.
    \end{equation}
    We decide  between (\ref{eqn:disj_21}) and (\ref{eqn:disj_22}) by computing the coordinates of $w$ in two different ways: beginning from $\underline{1}$ and beginning from $\underline{2}$.
\item If $|f_{kl}|=\dd(\underline{2}',\underline{21}')$, then the result is similar.
    \end{itemize}
\end{enumerate}

\m
If the couple $p,q$ is not generic, a sufficiently small perturbation $\tilde p,\tilde q$ of them is generic. We apply the previous paragraphs to $\tilde p,\tilde q$ and we obtain a line $\tilde L$. Then,  the line $L$ can be viewed as the result of the collapsing of some adjacent vertices on  $\tilde L$, or the points $p$  and $pq$ may coincide. Same for $q$ and $qp$. Passing from $\tilde L$ to $L$  amounts to vanishing of some $s$--active $\widetilde{f}_{kl}$,  with $1\le k<l\le s\le n$. The tree $L$ is caterpillar, though it might not be trivalent.
\end{proof}

%



\begin{figure}[h]
 \centering
  \includegraphics[keepaspectratio,width=10cm]{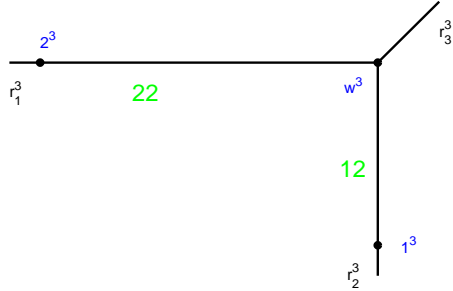}\\
  \caption{Construction of the tree $L$ in example \ref{ex:2}:  step $s=3$.}
  \label{fig:05}
\end{figure}
\begin{figure}[h]
 \centering
  \includegraphics[keepaspectratio,width=10cm]{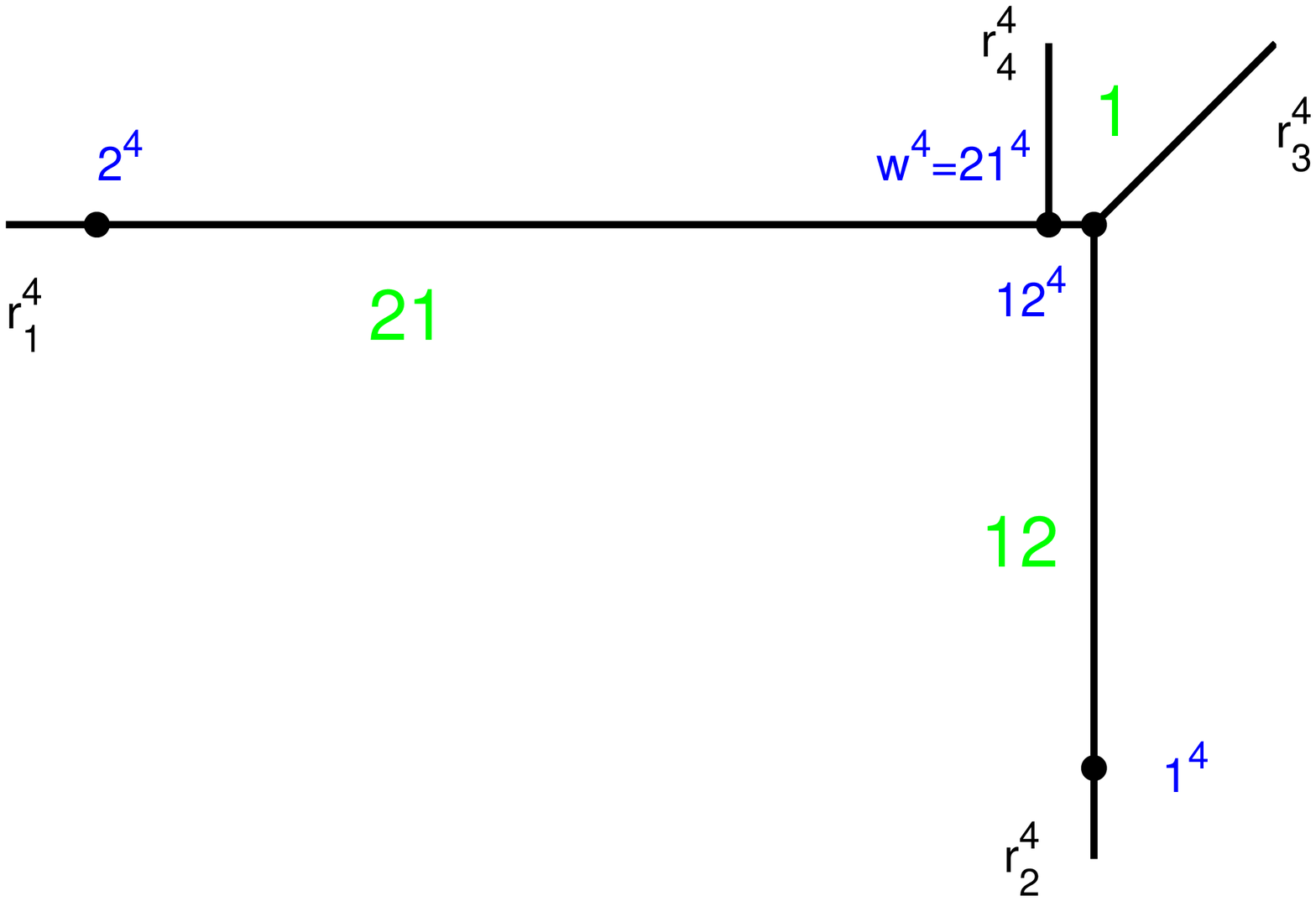}\\
  \caption{Construction of the tree $L$ in example \ref{ex:2}:  step $s=4$.}
  \label{fig:06}
\end{figure}
\begin{figure}[h]
 \centering
  \includegraphics[keepaspectratio,width=10cm]{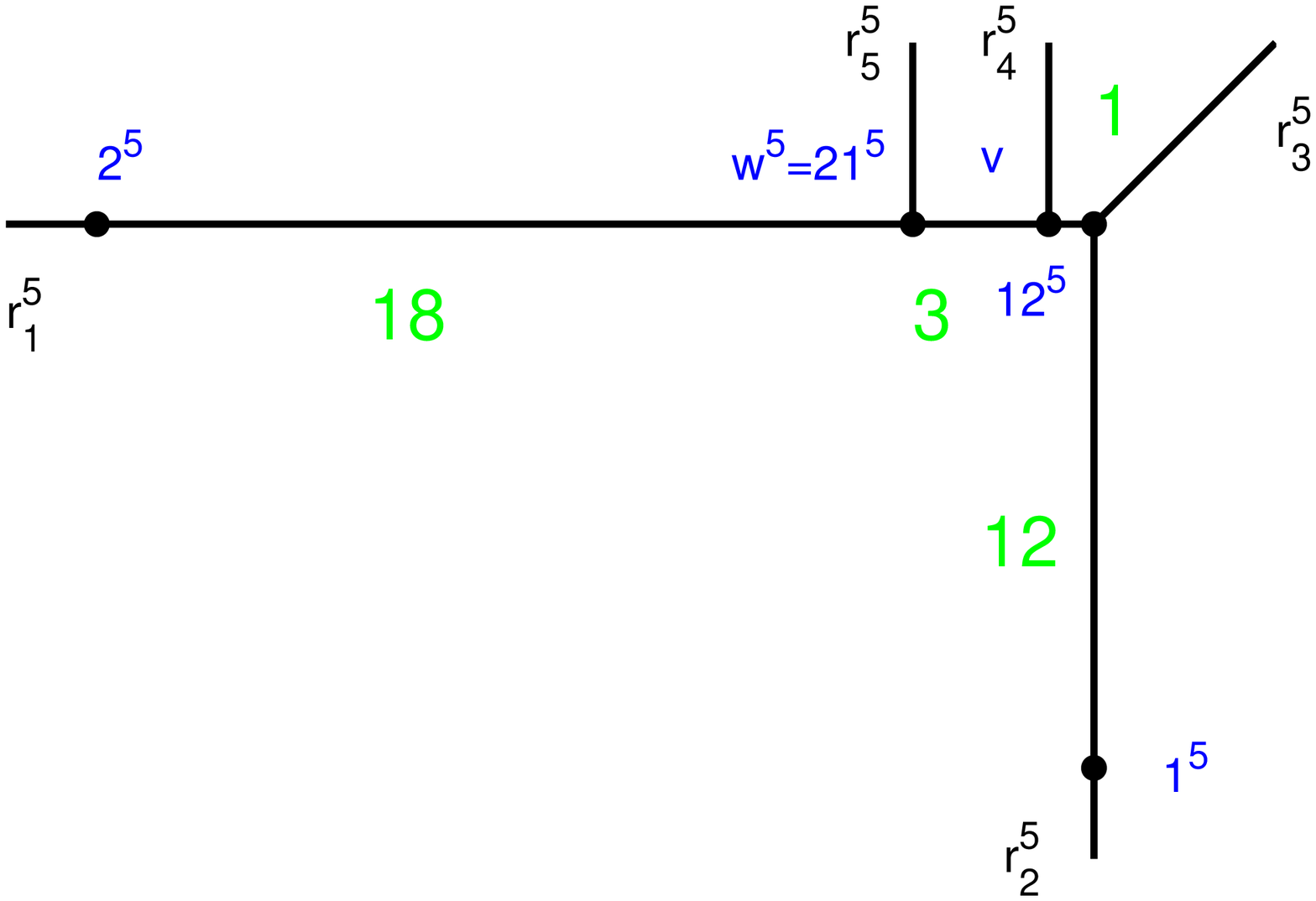}\\
  \caption{Construction of the tree $L$ in example \ref{ex:2}:  step $s=5$.}
  \label{fig:07}
\end{figure}
\begin{figure}[h]
 \centering
  \includegraphics[keepaspectratio,width=10cm]{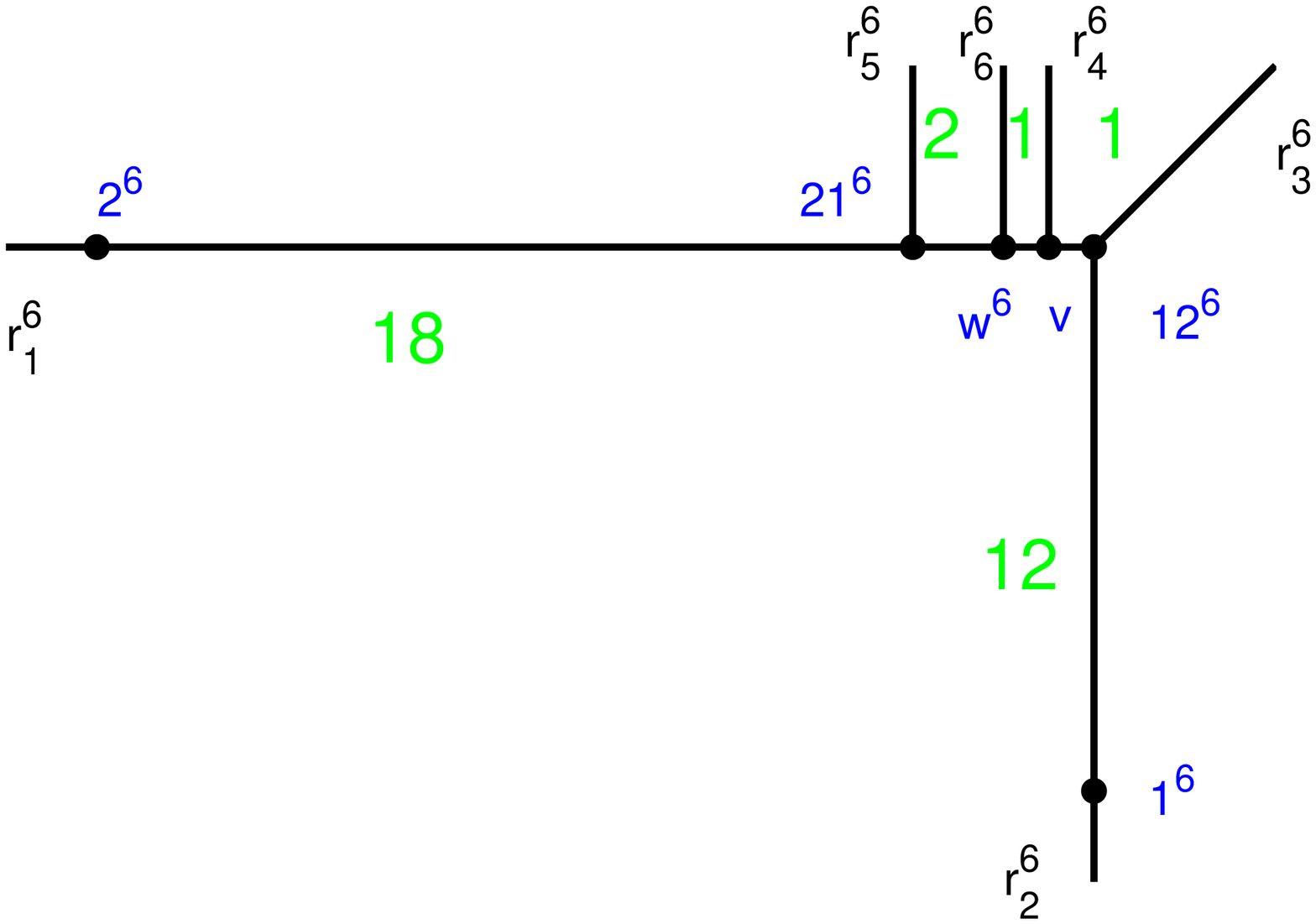}\\
  \caption{Construction of the tree $L$ in example \ref{ex:2}:  step $s=6$.}
\label{fig:08}
\end{figure}
\begin{figure}[h]
  \centering
  \includegraphics[keepaspectratio,width=10cm]{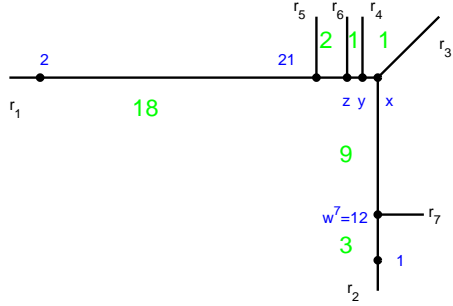}\\
  \caption{Construction of the tree $L$ in example \ref{ex:2}: final step.}
  \label{fig:09}
  \end{figure}

\begin{ex}\label{ex:2}  For $n=7$,
\begin{equation*}
\left[\begin{array}{rr}
0&-19\\-15&0\\-17&-14\\-16&-14\\-20&-21\\-18&-17\\-27&-15
\end{array}\right]
\end{equation*}
are the first two columns of a NI matrix $A=(a_{ij})$ (take, for instance $-28\le a_{st}\le -14$, if $s\neq t$ and $3\le t\le 7$).
Then $\dd(\underline{1},\underline{2})=\dd(\underline{1}^2,\underline{2}^2)=|f_{12}|=f_{12}=34$, by lemma \ref{lem:dist_given} and
\begin{equation*}\label{eqn:F}
F=\left(\begin{array}{rrrrrr}
34&22&21&18&20&31\\
&-12&-13&-16&-14&-3\\
&&-1&-4&-2&9\\
&&&-3&-1&10\\
&&&&2&13\\
&&&&&11
\end{array}\right).
\end{equation*}

For $3\le s\le 7$, active entries of $F^s$ will be boxed.

\begin{itemize}
\item $s=3$ (see figure \ref{fig:05}). 
The vertex  of the line $L^3$, denoted $w^3$,  is $[-a_{31},-a_{32},0]^t=[17,14,0]^t$, by Cramer's rule (\ref{eqn:Cramer}). We have a fracture
\begin{align*}
34&=22+12\\
|f_{12}|&=|f_{13}|+|f_{23}|
\end{align*}
and
\begin{equation}\label{eqn:w31}
\underline{1}^3 +12 \overline{e_2}=\colt{0}{-3}{-17}=\colt{3}{0}{-14}=\underline{2}^3+22 \overline{e_1}=w^3.
\end{equation}
 Thus
\begin{equation*}
\dd(\underline{1}^3,w^3)=12=|f_{23}|,\quad \dd(\underline{2}^3,w^3)=22=|f_{13}|.
\end{equation*}

The 3--active $f_{sl}$ are boxed below: \begin{equation*}
F=\left(\begin{array}{rrrrrr}
34&\boxed{22}&21&18&20&31\\
&\boxed{-12}&-13&-16&-14&-3\\
&&-1&-4&-2&9\\
&&&-3&-1&10\\
&&&&2&13\\
&&&&&11
\end{array}\right).
\end{equation*}

\item $s=4$ (see figure \ref{fig:06}). We have $-1=f_{34}<0$ so, by the remark after lemma  \ref{lem:types},   the type of $L^4$ is $\{14,23\}$. This means that $r_4^4$ and $r_1^4$ meet, and  $r_2^4$ and $r_3^4$ meet too inside $L^4$.
    This is case \ref{case:2} of the previous proof, with $j=2$. Since   $\underline{2}^4\in r_1^4$, then the point where $r_4^4$ and $r_1^4$ meet must be $\underline{21}^4$. The entry $f_{13}$ is 3--active and we have the fracture
    \begin{align*}
     22&=21+1\\
    |f_{13}|&=|f_{14}|+|f_{34}|.
    \end{align*}
     of $\dd(\underline{2}^3,w^3)=|f_{13}|$. In fact, $w^4$ can be relabeled  $\underline{21}^4$ and
     \begin{equation*}
    \underline{12}^4=\underline{1}^4+12\overline{e_2}=\colur{0}{-3}{-17}{-16},
    \quad \dd(\underline{1}^4,\underline{12}^4)=|f_{23}|=12,
    \end{equation*}
    \begin{equation*}
    \underline{21}^4=\underline{2}^4+21 \overline{e_1}=\colur{2}{0}{-14}{-14},\quad \dd(\underline{2}^4,\underline{21}^4)=|f_{14}|=21,
    \end{equation*}
    \begin{equation*}
    \underline{21}^4-\underline{12}^4= \colur{2}{3}{3}{2}= \colur{0}{1}{1}{0},\quad \dd(\underline{12}^4,\underline{21}^4)=1=|f_{34}|,
    \end{equation*}
    $$34=21+1+12$$
    and
\begin{equation*}
F=\left(\begin{array}{rrrrrr}
34&22&\boxed{21}&18&20&31\\
&\boxed{-12}&-13&-16&-14&-3\\
&&\boxed{-1}&-4&-2&9\\
&&&-3&-1&10\\
&&&&2&13\\
&&&&&11
\end{array}\right).
\end{equation*}

\item $s=5$ (see figure \ref{fig:07}). Then $|f_{14}|>|f_{15}|$ and $f_{14}$ is 4--active, so that
\begin{align*}
21&=18+3\\
|f_{14}|&=|f_{15}|+|f_{45}|
\end{align*}
is a fracture of $\dd(\underline{2}^4,\underline{21}^4)=|f_{14}|$. This is case \ref{case:2} of previous proof with $j=1$. Thus, the tree $L^5$ is given by
\begin{equation*}
\{15,234\},\{145,23\}
\end{equation*}
and it is caterpillar. We have
\begin{equation}\label{eqn:w51}
\underline{1}^5+12 \overline{e_2} +1\overline{e_{23}} +3\overline{e_{234}} =\colc{0}{1}{-13}{-13}{-20}=\colc{-1}{0}{-14}{-14}{-21}=\underline{2}^5+18 \overline{e_1},
\end{equation}
so that this point is $w^5$. Then,
\begin{equation*}
\dd(\underline{1}^5,\underline{12}^5)=|f_{23}|=12,\quad \dd(\underline{2}^5,\underline{21}^5)=|f_{15}|=18.
 \end{equation*}
In addition to $\underline{1}^5,\underline{12}^5,\underline{21}^5$ and $\underline{2}^5$, there is one more vertex  in $L^5$, denoted $v$ temporarily, and
we have
\begin{equation*}
\dd(\underline{12}^5,v)=|f_{34}|=1,\quad \dd(\underline{21}^5,v)=|f_{45}|=3,
\end{equation*}
$$34=18+3+1+12,$$

\begin{equation*}
F=\left(\begin{array}{rrrrrr}
34&22&21&\boxed{18}&20&31\\
&\boxed{-12}&-13&-16&-14&-3\\
&&\boxed{-1}&-4&-2&9\\
&&&\boxed{-3}&-1&10\\
&&&&2&13\\
&&&&&11
\end{array}\right).
\end{equation*}

\item $s=6$ (see figure \ref{fig:08}). Then $|f_{45}|>|f_{46}|$ and $f_{45}$ is 5--active, so that
\begin{align*}
3&=1+2\\
|f_{45}|&=|f_{46}|+|f_{56}|
\end{align*}
 is a fracture of $\dd(\underline{21}^5,v)=|f_{45}|$. A ray $r_6^6$ sprouts up from  the segment of  $L^5$ joining $\underline{21}^5$ and $v$. This is case \ref{case:1} of the previous proof. This happens at a point denoted $w^6$ temporarily and, therefore, tree $L^6$ is given by
 \begin{equation*}
 \{15,2346\},\{156,234\},\{1456,23\}.
 \end{equation*}
 Thus, $L^6$ is caterpillar and we have
 \begin{equation*}
 \underline{1}^6 +12 \overline{e_{2}}+1\overline{e_{23}}+1\overline{e_{234}}=\cols{0}{-1}{-15}{-15}{-20}{-18}=\cols{1}{0}{-14}{-14}{-19}{-17}=\underline{2}^6 +18 \overline{e_{1}}+2\overline{e_{15}},
 \end{equation*}
 and this point is $w^6$. Thus,
\begin{equation*}
 \dd(\underline{21}^6,{w}^6)=|f_{56}|=2,\quad\dd({w}^6,v)=|f_{46}|=1, \text{(this information is new)}
 \end{equation*}
 \begin{equation*}
 \dd(\underline{2}^6,\underline{21}^6)=|f_{15}|=18,\quad \dd(v,\underline{12}^6)=|f_{34}|=1,\quad \dd(\underline{12}^6,\underline{1}^6)=|f_{23}|=12,
 \end{equation*}
 $$34=18+2+1+1+12,$$
 \begin{equation*}
F=\left(\begin{array}{rrrrrr}
34&22&21&\boxed{18}&20&31\\
&\boxed{-12}&-13&-16&-14&-3\\
&&\boxed{-1}&-4&-2&9\\
&&&-3&\boxed{-1}&10\\
&&&&\boxed{2}&13\\
&&&&&11
\end{array}\right).
\end{equation*}

\item $s=7$ (see figure \ref{fig:09}). Then $|f_{23}|>|f_{27}|$ and $f_{23}$ is 6--active, whence
\begin{align*}
12&=3+9\\
|f_{23}|&=|f_{27}|+|f_{37}|
\end{align*}
is a fracture of $\dd(\underline{1}^6,\underline{12}^6)=|f_{23}|$. A ray $r_7^7$ sprouts out of $r_1^6$ (this is case \ref{case:2} of previous proof, with $j=2$) at a point labeled $w^7$. The tree $L=L^7$ is given by
\begin{equation*}
 \{15,23467\},\{156,2347\},\{1456,237\},\{13456,27\}
 \end{equation*}
 and we have
 \begin{equation}\label{eqn:w71}
 \underline{1}+3\overline{e_{2}}=\cole{0}{-12}{-17}{-16}{-20}{-18}{-27}=\cole{12}{0}{-5}{-4}{-8}{-6}{-15}
 =\underline{2}+18\overline{e_{1}}+2\overline{e_{15}}+1\overline{e_{156}}+1\overline{e_{1456}}+9\overline{e_{13456}}
 \end{equation}
 so that this point is $w^7$.
 Now, we relabel $w^7$ as $\underline{12}$.
In addition to vertices \underline{12} and \underline{21}, there are three more vertices in $L$, labeled $x,y$ and $z$.
We have
\begin{equation*}
\dd(\underline{1},\underline{12})=3,\quad \dd(\underline{12},x)=9,  \text{(this information is new)}
\end{equation*}
\begin{equation*}
\dd(x,y)=\dd(y,z)=1,\quad \dd(z,\underline{21})=2,\quad \dd(\underline{21},\underline{2})=18,
\end{equation*}
$$34=18+2+1+1+9+3,$$
\begin{equation*}
F=\left(\begin{array}{rrrrrr}
34&22&21&\boxed{18}&20&31\\
&-12&-13&-16&-14&\boxed{-3}\\
&&\boxed{-1}&-4&-2&\boxed{9}\\
&&&-3&\boxed{-1}&10\\
&&&&\boxed{2}&13\\
&&&&&11
\end{array}\right),
\end{equation*}
and finally
\begin{equation*}
|F|=\left(\begin{array}{ccccccc}
*&*&*&{\dd(\underline{2},\underline{21})}&*&*\\
&*&*&*&*&{\dd(\underline{1},\underline{12})}\\
&&{\dd(x,y)}&*&*&{\dd(\underline{12},x)}\\
&&&*&{\dd(y,z)}&*\\
&&&&{\dd(z,\underline{21})}&*\\
&&&&&*
\end{array}\right).
\end{equation*}

\end{itemize}
\end{ex}

\m
Remark: an algorithm is implicit in the the previous process; the details of it are postponed to a future paper.

\section*{Acknowledgement} We are grateful to an anonymous referee, who read an earlier version of this paper and pointed out some ways to improve it.


\centerline{\footnotesize{M. J. de la Puente. Dpto. de Algebra. Facultad de Matem\'{a}ticas. Universidad Complutense. Madrid. Spain. \texttt{mpuente@mat.ucm.es}}}
\end{document}